\newcommand{\tabincell}[2]{\begin{tabular}{@{}#1@{}}#2\end{tabular}}
\newtheorem{proposition}{Proposition}
\newtheorem{assumption}{Assumption}
\newtheorem{theorem}{Theorem}
\newcommand*\showfontsize{\fbox{\the\fontdimen6\the\font}}
\begin{document}

\title{Adaptive sieving: A dimension reduction technique for sparse optimization problems}


\author{
	Yancheng Yuan\thanks{Department of Applied Mathematics, The Hong Kong Polytechnic University, Hung Hom, Hong Kong. \texttt{yancheng.yuan@polyu.edu.hk}} 
	\and
	Meixia Lin\thanks{(Corresponding author) Engineering Systems and Design, Singapore University of Technology and Design, Singapore. \texttt{meixia\_lin@sutd.edu.sg}} 
	\and
	Defeng Sun\thanks{Department of Applied Mathematics, The Hong Kong Polytechnic University, Hung Hom, Hong Kong. \texttt{defeng.sun@polyu.edu.hk}} 
	\and
	Kim-Chuan Toh\thanks{Department of Mathematics and Institute of Operations Research and Analytics, National University of Singapore, Singapore. \texttt{mattohkc@nus.edu.sg}}
}


\maketitle

\begin{abstract}
In this paper, we propose an adaptive sieving (AS) strategy for solving general sparse machine learning models by effectively exploring the intrinsic sparsity of the solutions, wherein only a sequence of reduced problems with much smaller sizes need to be solved. We further apply the proposed AS strategy to generate solution paths for large-scale sparse optimization problems efficiently. We establish the theoretical guarantees for the proposed AS strategy including its finite termination property. Extensive numerical experiments are presented in this paper to demonstrate the effectiveness and flexibility of the AS strategy to solve large-scale machine learning models.
\end{abstract}

\noindent\textbf{Keywords:} Adaptive sieving, dimension reduction, sparse optimization problems

\vspace{0.3em}
\noindent\textbf{MSC Classes:} 90C06, 90C25, 90C90

\section{Introduction}
\label{sec:intro}
Consider the convex composite optimization problems of the following form:
\begin{align}
	\label{eq: lasso_model}
	\min_{x\in \mathbb{R}^n}\ \displaystyle \Big\{\Phi(x) + P(x)\Big\},
\end{align}
where $\Phi:\mathbb{R}^n\rightarrow \mathbb{R}$ is a convex twice continuously differentiable function and $P: \mathbb{R}^n \to (-\infty,+\infty]$ is a closed and proper convex function. The optimization problems in this form cover a wide class of models in modern data science applications and statistical learning. In practice, the regularizer $P(\cdot)$ is usually chosen to enforce sparsity with desirable structure in the estimators obtained from the model, especially in high-dimensional cases. For example, the Lasso regularizer \cite{tibshirani1996regression} is proposed to force element-wise sparsity in the predictors, and the group lasso regularizer \cite{yuan2006model} is proposed to impose group-wise sparsity. Moreover, many more complicated regularizers have also been proposed to study other structured sparsity, such as the sparse group lasso regularizer \cite{eldar2009robust,jacob2009group}, the Sorted L-One Penalized Estimation (SLOPE) \cite{bogdan2015slope} and the exclusive lasso regularizer \cite{zhou2010exclusive,kowalski2009sparse}.

Currently, many popular first-order methods have been proposed to solve the problems in the form of \eqref{eq: lasso_model}, such as the accelerated proximal gradient (APG) method \cite{beck2009fast}, the alternating direction method of multipliers (ADMM) \cite{eckstein1992douglas,glowinski1975approximation} and the block coordinate descent (BCD) method \cite{tseng1993dual,grippo2000convergence,sardy2000block}. These algorithms are especially popular in machine learning and statistics in recent years. However, first-order methods are often not robust and generally are only able to deliver low accuracy solutions. Other than first-order methods, some second-order methods have been developed for solving convex composite optimization problems in the form of \eqref{eq: lasso_model}, including the coderivative-based generalized Newton method \cite{khanh2024globally}, the forward-backward quasi-Newton method \cite{stella2017forward}, the proximal trust-region method \cite{baraldi2023proximal}, and second-order algorithms based on the augmented Lagrangian method \cite{li2018highly,cui2018composite,tang2023proximal,liu2024inexact}.

However, both the first-order and second-order algorithms for solving the problems in the form of \eqref{eq: lasso_model} will face significant challenges when scaling to high-dimensional problems. Fortunately, the desired solutions to the model in practice are usually highly sparse, where the nonzero entries correspond to selected features. Inspired by this property, in this paper, we propose an adaptive sieving strategy for solving sparse optimization models in the form of \eqref{eq: lasso_model}, by sieving out a large proportion of inactive features to significantly reduce the dimension of the problems, which can then highly accelerate the computation. The proposed AS strategy does not depend on the specific form of the regularizer $P(\cdot)$, as long as the proximal mapping of $P(\cdot)$ can be computed in an efficient way. It is worth emphasizing that our AS strategy can be applied to any solver providing that it is able to solve the reduced problems to the required accuracy. Numerical experiments demonstrate that our AS strategy is very effective in reducing the problem dimensions and accelerating the computation for solving sparse optimization models.

We will then apply the proposed AS strategy to generate solution paths for large-scale machine learning models, which is necessary for model selection via cross-validation in practice. In particular, we are interested in solving the following problem for a sequence of values of the parameter $\lambda$:
\begin{align}
	\label{eq: lasso_model2}
	\min_{x\in \mathbb{R}^n}\ \displaystyle \Big\{\Phi(x) + \lambda P(x)\Big\}.\tag{$P_{\lambda}$}
\end{align}
Here the hyper-parameter $\lambda > 0$ is added to control the trade-off between the loss and the sparsity level of the solutions. To reduce the computation time of obtaining a solution path, especially for high-dimensional cases, various feature screening rules, which attempt to drop some inactive features based on prior analysis, have been proposed. Tibshirani et al. \cite{tibshirani2012strong} proposed a strong screening rule (SSR) based on the ``unit-slope" bound assumption for the Lasso model and the regression problem with the elastic net regularizer. This idea has been extended to the SLOPE model recently \cite{SSR-SLOPE}. Compared to this unsafe screening rule where some active features may be screened out by mistake, safe screening rules have been extensively studied. The first safe screening rule was proposed by El Ghaoui et al. \cite{ghaoui2010safe} for Lasso models. Later on, Wang et al. \cite{wang2013lasso,wang2015lasso} proposed a dual polytope projection based screening rule (DPP) and an enhanced version (EDPP) for Lasso and group lasso models via carefully analyzing the geometry of the corresponding dual problems. Other safe screening rules, like Sphere test \cite{xiang2016screening}, have been proposed via different strategies for estimating compact regions containing the optimal solutions to the dual problems. The safe screening rules will not exclude active features, but in many instances they only exclude a small subset of inactive features due to their conservative nature. In addition, the safe screening rules are usually not applicable to generate a solution path for a sequence of the hyperparameters with large gaps. Recently, Zeng et al. \cite{zeng2021hybrid} combined the SSR and the EDPP to propose a hybrid safe-strong screening rule, which was implemented in an R package {\tt biglasso} \cite{zeng2017biglasso}. However, these screening rules are usually problem specific and they are difficult to be applied to the models with general regularizers, like the exclusive lasso regularizer, as they are highly dependent on the separability and positive homogeneity of the regularizers. Also, they implicitly require the reduced problems to be solved exactly.

Based on our proposed AS strategy for solving the problem \eqref{eq: lasso_model}, we design a path generation method for generating solution paths of the machine learning model \eqref{eq: lasso_model2}, wherein a sequence of reduced problems with much smaller sizes need to be solved. Our path generation method exhibits great improvement over the existing screening rules in three aspects. First, it applies to the models with general regularizers, including those are non-separable or not positively homogeneous. Second, it does not require the reduced problems to be solved exactly. Third, as we will see in the numerical experiments, it is more aggressive, and is able to sieve out more inactive features. Here, it is worth mentioning that, the aforementioned screening rules are only for generating the solution path and they cannot be applied directly for solving a single problem with a fixed parameter.

The remaining part of the paper is organized as follows. In Section \ref{sec: screening}, we propose the adaptive sieving strategy for solving the sparse optimization models in the form of \eqref{eq: lasso_model}, followed by its theoretical analysis including the finite termination property in Section \ref{sec: analysis}. In Section \ref{sec: inexact}, we will design a path generation method for general sparse optimization models based on the proposed AS strategy. Section \ref{sec: experiments} provides the numerical performance of the proposed AS strategy for solving various machine learning models on both synthetic data and real data. Experiments are also conducted to demonstrate the superior performance of the path generation method based on the AS strategy compared to other popular screening rules. Finally, we conclude the paper.

\vspace{0.2cm}
\noindent\textbf{Notations:} Denote the set $[n] = \{1,2,\cdots,n\}$. For any $I\subseteq [n]$, $\overline{I}$ denotes the complement of $I$ in $[n]$. For any $z\in \mathbb{R}$, ${\rm sign}(z)$ denotes the sign function of $z$. For any $x\in \mathbb{R}^n$, denotes its $p$-norm as
$\|x\|_p = (\sum_{i=1}^p |x_i|^p)^{1/p}$. For simplicity, we denote $\|\cdot\|=\|\cdot\|_2$. For any vector $x\in \mathbb{R}^n$ and any index set $I\subseteq [n]$, denote $x_I$ as the subvector generated by the elements of $x$ indexed by $I$. For any closed and proper convex function $q:\mathbb{R}^n\rightarrow (-\infty,\infty]$, the proximal mapping of $q(\cdot)$ is defined by
\begin{align}
	{\rm Prox}_q(x):=\underset{y\in \mathbb{R}^n}{\arg\min}\ \Big\{ q(y)+\frac{1}{2}\|y-x\|^2\Big\},\label{eq: envelope}
\end{align}
for any $x\in \mathbb{R}^n$. It is known that ${\rm Prox}_q(\cdot)$ is Lipschitz continuous with modulus $1$ \cite{rockafellar1976monotone}.

\section{Dimension reduction via adaptive sieving}
\label{sec: screening}
Throughout this paper, we make the following blanket assumption, which is satisfied for many popular machine learning models, as discussed in \cite[Section 2.1]{zhou2017unified}.

\begin{assumption}
	Assume that the solution set of \eqref{eq: lasso_model}, denoted as $\Omega$, is nonempty and compact.
\end{assumption}

\subsection{The adaptive sieving strategy}
We propose an adaptive sieving strategy for solving sparse optimization models in the form of \eqref{eq: lasso_model}. An appealing property of this strategy is that it does not depend on the specific form of the regularizer and thus can be applied to sparse optimization problems with general regularization functions. More importantly, due to the adaptive nature of the AS strategy, it can sieve out a very large proportion of inactive features to significantly reduce the dimension of the problems and accelerate the computation by a large margin. This further shows that the AS strategy actually serves as a powerful dimension reduction technique for sparse optimization models.

\begin{algorithm}[H]
	\caption{An adaptive sieving strategy for solving \eqref{eq: lasso_model}}
	\label{alg:screening}
	\begin{algorithmic}[1]
		\STATE \textbf{Input}: an initial index set $I^0 \subseteq [n]$, a given tolerance $\epsilon\geq 0$ and a given positive integer $k_{\max}$ (e.g., $k_{\max}=500 $).
		\STATE \textbf{Output}: an approximate solution $x^*$ to the problem \eqref{eq: lasso_model} satisfying $\|R(x^*)\|\leq \epsilon$.
		\STATE \textbf{1}. Find
		\begin{align}
			x^{0} \in \underset{ x\in \mathbb{R}^n} {\arg\min} \  \Big\{\Phi(x)  +P(x) - \langle \delta^0,x\rangle	\ \mid \ x_{\overline{I^0}}=0\Big\},\label{eq: lambda0_problem}
		\end{align}
		where $\delta^0\in \mathbb{R}^n$ is an error vector such that $\|\delta^0\|\leq \epsilon$ and $(\delta^0)_{\overline{I^0}}=0$.
		
		\textbf{2}. Compute $R(x^0)$ and set $s=0$.
		\WHILE{$\|R(x^{s})\|> \epsilon$}
		\STATE \textbf{3.1}. Create $J^{s+1}$ as
		\begin{align}
			J^{s+1} = \Big\{ j\in \overline{I^s}\; \mid \; (R(x^s))_j\neq 0\Big\}.\label{eq: create_J}
		\end{align}
		(We prove in Theorem~\ref{thm: convergence_whileloop} that $J^{s+1}\not=\emptyset$.)
		Let $k$ be a positive integer satisfying $k \leq \min \{ |J^{s+1}|, k_{\max}\}$ and define 
		\begin{align*}
			\widehat{J}^{s+1} = \Big\{j \in J^{s+1} ~|~ \mbox{ $|(R(x^s))_j|$ is among the first $k$ largest values in $\{ |(R(x^s))_i| \}_{i \in J^{s+1} } $ } \Big\}.
		\end{align*}
		Update $I^{s+1}$ as:		
		\begin{align*}
			I^{s+1} \leftarrow I^{s} \cup \widehat{J}^{s+1}.
		\end{align*}
		\STATE \textbf{3.2}. Solve the following constrained problem with the initialization $x^s$,
		\begin{align}
			x^{s+1} \in \underset{ x\in \mathbb{R}^n} {\arg\min} \  \Big\{\Phi(x)  + P(x) - \langle \delta^{s+1},x\rangle \ \mid \ x_{\overline{I^{s+1}}}=0\Big\},\label{eq: constrained}
		\end{align}
	     where $\delta^{s+1}\in \mathbb{R}^n$ is an error vector such that $\|\delta^{s+1}\|\leq \epsilon$ and $(\delta^{s+1})_{\overline{I^{s+1}}}=0$.
		\STATE \textbf{3.3}: Compute $R(x^{s+1})$ and set $s\leftarrow s+1$.
		\ENDWHILE
		\STATE \textbf{return}: Set $x^*=x^{s}$.
	\end{algorithmic}
\end{algorithm}

We give the details of the AS strategy in Algorithm \ref{alg:screening}. Here in order to measure the accuracy of the approximate solutions, we define the proximal residual function $R:\mathbb{R}^n\rightarrow \mathbb{R}^n$ associated with the problem \eqref{eq: lasso_model} as
\begin{align}
	R(x):=x-{\rm Prox}_{P}(x-\nabla \Phi(x)),\quad  x\in \mathbb{R}^n.\label{eq:residual}
\end{align}
The KKT condition of \eqref{eq: lasso_model} implies that $\bar{x}\in \Omega$ if and only if $R(\bar{x})=0$.

A few remarks of Algorithm \ref{alg:screening} are in order. First, in Algorithm \ref{alg:screening}, the introduction of the error vectors $\delta^0$, $\{\delta^{s+1}\}$ in \eqref{eq: lambda0_problem} and \eqref{eq: constrained} implies  that the corresponding minimization problems can be solved inexactly. It is important to emphasize that the vectors are not a priori given but they are the errors incurred when the original problems (with $\delta^0=0$ in \eqref{eq: lambda0_problem} and $\delta^{s+1}=0$ in \eqref{eq: constrained}) are solved inexactly. We will explain how the error vectors $\delta^0$, $\{\delta^{s+1}\}$ in \eqref{eq: lambda0_problem} and \eqref{eq: constrained} can be obtained in Section \ref{sec: analysis}. Second, the initial active feature index set $I^0$ is suggested to be chosen such that $|I^0|\ll n$, in consideration of the computational cost. Third, the sizes of the reduced problems \eqref{eq: lambda0_problem} and \eqref{eq: constrained} are usually much smaller than $n$, which will be demonstrated in the numerical experiments.

We take the least squares linear regression problem as an example to show an efficient initialization of the index set $I^0$ via the correlation test, which is similar to the idea of the surely independent screening rule in \cite{fan2008sure}. Consider $\Phi(x) = \frac{1}{2}\|Ax-b\|^2$, where $A=[a_1,a_2,\cdots,a_n] \in \mathbb{R}^{m \times n}$ is a given feature matrix, $b\in \mathbb{R}^m$ is a given response vector. One can choose $k \lceil \sqrt{n} \rceil $ initial active features based on the correlation test between each feature vector $a_i$ and the response vector $b$. That is, one can compute $s_i := |\langle a_i, b \rangle|/(\|a_i\|\|b\|)$ for $i=1,\cdots,n$, and choose the initial guess of $I^0$ as
\begin{align*}
	I^0=\left\{i\in [n]: s_i\mbox{ is among the first } k \lceil \sqrt{n} \rceil \mbox{ largest values in} \; s_1,\ldots,s_n \right\}.
\end{align*}
In practice, we usually choose $k = 10$.

\subsection{Examples of the regularizer}
\label{sec: example-regularizers}
As we shall see in Algorithm \ref{alg:screening}, there is no restriction on the form of the regularizer $P(\cdot)$. In particular, it is not necessary to be separable or positively homogeneous, which is an important generalization compared to the existing screening rules \cite{tibshirani2012strong,ghaoui2010safe,wang2013lasso,wang2015lasso,xiang2016screening,zeng2021hybrid,zeng2017biglasso}. The only requirement for $P(\cdot)$ is that its proximal mapping ${\rm Prox}_P(\cdot)$ can be computed in an efficient way. Fortunately, it is the case for almost all popular regularizers used in practice. Below, we list some examples of the regularizers for better illustration.
\begin{itemize}
	\item Lasso regularizer \cite{tibshirani1996regression}:
	\begin{align*}
		P(x) = \lambda \|x\|_1,\quad x\in \mathbb{R}^n,
	\end{align*}
	where $\lambda > 0$ is a given parameter.
	\item Elastic net regularizer \cite{zou2005regularization}:
	\begin{align*}
		P(x) = \lambda_1 \|x\|_1+\lambda_2 \|x\|^2,\quad x\in \mathbb{R}^n,
	\end{align*}
	where $\lambda_1,\lambda_2 > 0$ are given parameters.
	\item Sparse group lasso regularizer \cite{eldar2009robust,jacob2009group}:
	\begin{align*}
		P(x) = \lambda_1 \|x\|_1 + \lambda_2 \sum_{l=1}^g w_l \|x_{G_l}\|,\quad x\in \mathbb{R}^n,
	\end{align*}
	where $\lambda_1,\lambda_2>0$, $w_1,\cdots,w_g\geq 0$ are parameters, and $\{G_1,\cdots,G_g\}$ is a disjoint partition of the set $[n]$. For the limiting case when $\lambda_1=0$, we get the group lasso regularizer \cite{yuan2006model}.
	\item Exclusive lasso regularizer \cite{zhou2010exclusive,kowalski2009sparse}:
	\begin{align*}
		P(x) = \lambda \sum_{l=1}^g  \|w_{G_l} \circ x_{G_l}\|_1^2,\quad x\in \mathbb{R}^n,
	\end{align*}
	where $\lambda > 0$ is a given parameter, $w\in \mathbb{R}^n_{++}$ is a weight vector and $\{G_1,\cdots,G_g\}$ is a disjoint partition of the index set $[n]$.
	\item Sorted L-One Penalized Estimation (SLOPE) \cite{bogdan2015slope}:
	\begin{align*}
		P(x) = \sum_{i=1}^n \lambda_i |x|_{(i)},
	\end{align*}
	with parameters $\lambda_1\geq \cdots \geq \lambda_n\geq 0$ and $\lambda_1>0$. For a given vector $x\in \mathbb{R}^n$, we denote $|x|$ to be the vector in $\mathbb{R}^n$ obtained from $x$ by taking the absolute value of its components. We define $|x|_{(i)}$ to be the $i$-th largest component of $|x|$ such that $|x|_{(1)}\geq \cdots \geq |x|_{(n)}$.
\end{itemize}

Among the above examples, the screening rules of the Lasso regularizer and the group lasso regularizer have been intensively studied in \cite{tibshirani2012strong,ghaoui2010safe,wang2013lasso,wang2015lasso,xiang2016screening,zeng2021hybrid,zeng2017biglasso}. Recently, a strong screening rule has been proposed for SLOPE \cite{SSR-SLOPE}. However, no unified approach has been proposed for sparse optimization problems with general regularizers. Fortunately, our proposed AS strategy is applicable to all the above examples, which includes three different kinds of ``challenging" regularizers. In particular, 1) the sparse group lasso regularizer is a combination of two regularizers; 2) the SLOPE is not separable; 3) the exclusive lasso regularizer is not positively homogeneous.

\section{Theoretical analysis of the AS strategy}
\label{sec: analysis}
In this section, we provide the theoretical analysis of the proposed AS strategy for solving sparse optimization problems presented in Algorithm \ref{alg:screening}. An appealing advantage of our proposed AS strategy is that it allows the involved reduced problems to be solved inexactly due to the introduction of the error vectors $\delta^0$, $\{\delta^{s+1}\}$ in \eqref{eq: lambda0_problem} and \eqref{eq: constrained}. It is important to emphasize that the vectors are not a priori given but they are the errors incurred when the original problems (with $\delta^0=0$ in \eqref{eq: lambda0_problem} and $\delta^{s+1}=0$ in \eqref{eq: constrained}) are solved inexactly. The following proposition explains how the error vectors $\delta^0$, $\{\delta^{s+1}\}$ in \eqref{eq: lambda0_problem} and \eqref{eq: constrained} can be obtained.

\begin{proposition}
	Given any $s = 0,1,\cdots$. The updating rule of $x^s$ in Algorithm \ref{alg:screening} can be interpreted in the procedure as follows. Let $M_s$ be a linear map from $\mathbb{R}^{|I^s|}$ to $\mathbb{R}^n$ defined as
	\begin{align*}
		(M_s z)_{I^s} = z,\quad (M_s z)_{\overline{I^s}} = 0,\quad z\in \mathbb{R}^{|I^s|},
	\end{align*}
	and $\Phi^s$, $P^s$ be functions from $\mathbb{R}^{|I^s|}$ to $\mathbb{R}$ defined as $\Phi^s(z):= \Phi(M_s z)$, $P^s(z) := P(M_s z)$ for all $z\in \mathbb{R}^{|I^s|}$. Then $x^s\in \mathbb{R}^n$ can be computed as
	\begin{align*}
		(x^s)_{I^s}:={\rm Prox}_{P^s}(\hat{z}-\nabla \Phi^s(\hat{z})),
	\end{align*}
	and $(x^s)_{\overline{I^s}}=0$, where $\hat{z}$ is an approximate solution to the problem
	\begin{align}
		\min_{z\in \mathbb{R}^{|I^s|}}\ \Big\{ \Phi^s(z) + P^s(z)\Big\},\label{eq: reduced_0}
	\end{align}
	which satisfies
	\begin{align}
		\|\hat{\delta}\|\leq \epsilon,\quad \hat{\delta}:=\hat{z}-(x^s)_{I^s} +\nabla \Phi^s((x^s)_{I^s})-\nabla \Phi^s(\hat{z}), \label{eq: control_delta0}
	\end{align}
	where $\epsilon$ is the parameter given in Algorithm \ref{alg:screening}.
	
	If the function $\Phi(\cdot)$ is $L$-smooth, that is, it is continuously differentiable and its gradient is Lipschitz continuous with constant $L$:
	\begin{align*}
		\|\nabla \Phi(x)-\nabla \Phi(y)\|\leq L \|x-y\|,\quad \forall x,y\in \mathbb{R}^n,
	\end{align*}
	then the condition \eqref{eq: control_delta0} can be achieved by
	\begin{align*}
		\|\hat{z}-{\rm Prox}_{P^s}(\hat{z}-\nabla \Phi^s(\hat{z}))\|\leq \frac{\epsilon}{1+L}.
	\end{align*}
\end{proposition}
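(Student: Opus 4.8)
The plan is to verify that the three-line ``procedure'' in the proposition produces a point $x^s$ that is an exact minimizer of the perturbed reduced problem appearing in \eqref{eq: constrained}, with an error vector $\delta^s$ obtained by setting $(\delta^s)_{I^s} = \hat{\delta}$ and $(\delta^s)_{\overline{I^s}} = 0$. The first observation I would make is that, because of the constraint $x_{\overline{I^s}} = 0$ together with the identity $\langle \delta^s, M_s z\rangle = \langle (\delta^s)_{I^s}, z\rangle$, the constrained problem \eqref{eq: constrained} over $\mathbb{R}^n$ is equivalent to the unconstrained problem $\min_{z \in \mathbb{R}^{|I^s|}} \{\Phi^s(z) + P^s(z) - \langle \hat{\delta}, z\rangle\}$ in the lower-dimensional variable $z = x_{I^s}$. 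Thus it suffices to show that $(x^s)_{I^s}$, as defined by the single proximal-gradient step, solves this $z$-problem exactly.

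For the main algebraic step, I would invoke the standard subdifferential characterization of the proximal mapping: $u = \mathrm{Prox}_{P^s}(v)$ holds if and only if $v - u \in \partial P^s(u)$. Applying this with $v = \hat{z} - \nabla\Phi^s(\hat{z})$ and $u = (x^s)_{I^s}$ yields $\hat{z} - \nabla\Phi^s(\hat{z}) - (x^s)_{I^s} \in \partial P^s((x^s)_{I^s})$. Rearranging, and adding and subtracting $\nabla\Phi^s((x^s)_{I^s})$, this inclusion is exactly $\hat{\delta} - \nabla\Phi^s((x^s)_{I^s}) \in \partial P^s((x^s)_{I^s})$, with $\hat{\delta}$ given precisely by the formula in \eqref{eq: control_delta0}. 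This is the first-order optimality condition for the perturbed $z$-problem, so $(x^s)_{I^s}$ is an exact solution of it; combined with $(\delta^s)_{\overline{I^s}} = 0$, the bound $\|\hat{\delta}\| \leq \epsilon$ then certifies $\|\delta^s\| \leq \epsilon$, exactly as required by Algorithm~\ref{alg:screening}.

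For the $L$-smooth refinement, I would first note that $\Phi^s$ inherits $L$-smoothness from $\Phi$: since $\nabla\Phi^s(z) = M_s^\top \nabla\Phi(M_s z)$ and $M_s$ is a coordinate embedding (so that $\|M_s z_1 - M_s z_2\| = \|z_1 - z_2\|$ and $\|M_s^\top\| \le 1$), one obtains $\|\nabla\Phi^s(z_1) - \nabla\Phi^s(z_2)\| \le L\|z_1 - z_2\|$. Writing $r := \hat{z} - \mathrm{Prox}_{P^s}(\hat{z} - \nabla\Phi^s(\hat{z})) = \hat{z} - (x^s)_{I^s}$ for the proximal residual of the reduced problem at $\hat{z}$, the formula for $\hat{\delta}$ becomes $\hat{\delta} = r + \nabla\Phi^s((x^s)_{I^s}) - \nabla\Phi^s(\hat{z})$. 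The triangle inequality and $L$-smoothness then give $\|\hat{\delta}\| \le \|r\| + L\|(x^s)_{I^s} - \hat{z}\| = (1+L)\|r\|$, so requiring $\|r\| \le \epsilon/(1+L)$ forces $\|\hat{\delta}\| \le \epsilon$.

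The computations here are all routine; the only points requiring care are the bookkeeping between the full-dimensional problem \eqref{eq: constrained} and its $z$-space reduction—in particular checking that the linear term transfers correctly and that the subdifferential of $P^s$ is the one induced by $P$ through $M_s$—and the verification that $\Phi^s$ remains $L$-smooth. I do not anticipate a genuine obstacle, but the cleanest exposition will come from first establishing the equivalence of the two problems and the inheritance of $L$-smoothness as separate preliminary observations, after which both assertions of the proposition follow immediately from the proximal--subdifferential identity.
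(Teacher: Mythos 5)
Your two substantive steps coincide with the paper's own proof. The core algebraic move is identical: from $(x^s)_{I^s}={\rm Prox}_{P^s}(\hat z-\nabla\Phi^s(\hat z))$ and the characterization $u={\rm Prox}_{P^s}(v)\Leftrightarrow v-u\in\partial P^s(u)$ you deduce $\hat\delta\in\nabla\Phi^s((x^s)_{I^s})+\partial P^s((x^s)_{I^s})$, hence exact optimality for the perturbed problem with $\delta^s=M_s\hat\delta$, $(\delta^s)_{\overline{I^s}}=0$ and $\|\delta^s\|=\|\hat\delta\|\le\epsilon$; and your $(1+L)$ bound is the paper's chain of inequalities, using $\|M_s^T u\|\le\|u\|$ and $\|M_s z_1-M_s z_2\|=\|z_1-z_2\|$ (you package this as ``$\Phi^s$ inherits $L$-smoothness,'' the paper writes the inequalities inline, but the computation is the same). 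Your explicit substitution argument $x=M_s z$ showing the constrained $x$-problem is equivalent to the unconstrained $z$-problem, including the transfer of the linear term via $\langle\delta^s,M_s z\rangle=\langle\hat\delta,z\rangle$, is a correct and in fact slightly cleaner rendering of what the paper asserts directly.

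The one piece of the paper's proof you omit is the existence, or achievability, claim: the proposition implicitly asserts that a point $\hat z$ satisfying \eqref{eq: control_delta0} can actually be produced, so that the described procedure is not vacuous. The paper proves this first: for any sequence $\{z^i\}$ converging to a minimizer of \eqref{eq: reduced_0}, the associated error vectors $\varepsilon^i:=z^i-{\rm Prox}_{P^s}(z^i-\nabla\Phi^s(z^i))+\nabla\Phi^s({\rm Prox}_{P^s}(z^i-\nabla\Phi^s(z^i)))-\nabla\Phi^s(z^i)$ tend to zero, since $\varepsilon^i$ is a continuous function of $z^i$ (continuity of the proximal map and of $\nabla\Phi^s$; the paper cites a lemma here) that vanishes at the limit point. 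This matters precisely because, in the general case where no Lipschitz constant for $\nabla\Phi$ is assumed, the quantity $\hat\delta$ involves $\nabla\Phi^s$ evaluated at two distinct points, and it is not a priori a checkable stopping criterion unless one knows it eventually falls below $\epsilon$ along the iterates. Adding one short paragraph of this form would make your proof complete; everything else is in order.
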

\begin{proof}
	Let $\{z^{i}\}$ be a sequence that converges to a solution of the problem \eqref{eq: reduced_0}. For any $i= 1,2,\cdots$, define $\varepsilon^{i}:=z^{i}-{\rm Prox}_{P^s}(z^{i}-\nabla \Phi^s(z^{i})) +\nabla \Phi^s({\rm Prox}_{P^s}(z^{i}-\nabla \Phi^s(z^{i})))-\nabla \Phi^s(z^{i})$. By the continuous differentiability of $\Phi^s(\cdot)$ and \cite[Lemma 4.5]{mengyu2015inexact}, we know that $\lim_{i\rightarrow \infty}\|\varepsilon^{i}\|=0$, which implies the existence of $\hat{z}$ in \eqref{eq: control_delta0}.
	
	Next we explain the reason why the updating rule of $x^s$ in Algorithm \ref{alg:screening} can be interpreted as the one stated in the proposition. Since $(x^s)_{I^s}:={\rm Prox}_{P^s}(\hat{z}-\nabla \Phi^s(\hat{z}))$, we have
	\begin{align*}
		\hat{z}-(x^s)_{I^s}-\nabla \Phi^s(\hat{z})\in  \partial P^s((x^s)_{I^s}).
	\end{align*}
	According to the definition of $\hat{\delta}$ in \eqref{eq: control_delta0}, we have
	\begin{align*}
		\hat{\delta} \in \nabla \Phi^s((x^s)_{I^s})+ \partial P^s((x^s)_{I^s}),
	\end{align*}
	which means that $x^s$ is the exact solution to the problem
	\begin{align*}
		\min_{ x\in \mathbb{R}^n} \  \Big\{\Phi(x)  + P(x) - \langle \delta^{s},x\rangle \ \mid \ x_{\overline{I^{s}}}=0\Big\},
	\end{align*}
	with $\delta^s:=M_s \hat{\delta}$. Moreover, we can see that $(\delta^s)_{\overline{I^s}}= (M_s \hat{\delta})_{\overline{I^s}}=0$ and
	\begin{align*}
		\|\delta^s\|= \|\hat{\delta}\|=\|\hat{z}-(x^s)_{I^s} +\nabla \Phi^s((x^s)_{I^s})-\nabla \Phi^s(\hat{z})\|\leq \epsilon.
	\end{align*}
	
	If $\Phi(\cdot)$ is $L$-smooth, we have that
	\begin{align*}
		\|\hat{\delta}\|&=\|\hat{z}-(x^s)_{I^s} +\nabla \Phi^s((x^s)_{I^s})-\nabla \Phi^s(\hat{z})\|\\
		&\leq \|\hat{z}-(x^s)_{I^s}\| +\|\nabla \Phi^s((x^s)_{I^s})-\nabla \Phi^s(\hat{z})\|\\
		& = \|\hat{z}-(x^s)_{I^s}\| +\|M_s^T \nabla \Phi(M_s (x^s)_{I^s})-M_s^T\nabla \Phi(M_s\hat{z})\|\\
		&\leq \|\hat{z}-(x^s)_{I^s}\| +\| \nabla \Phi(M_s (x^s)_{I^s})-\nabla \Phi(M_s\hat{z})\|\\
		&\leq \|\hat{z}-(x^s)_{I^s}\| +L\| M_s (x^s)_{I^s}-M_s\hat{z}\|\\
		& = (1+L)\|\hat{z}-(x^s)_{I^s}\|,
	\end{align*}
	which means that \eqref{eq: control_delta0} can be achieved by
	\begin{align*}
		\|\hat{z}-{\rm Prox}_{P^s}(\hat{z}-\nabla \Phi^s(\hat{z}))\|\leq \frac{\epsilon}{1+L}.
	\end{align*}
	This completes the proof.
\end{proof}

Next, we will show the convergence properties of Algorithm \ref{alg:screening} in the following proposition, which states that the algorithm will terminate after a finite number of iterations. 

\begin{theorem}\label{thm: convergence_whileloop}
	The while loop in Algorithm \ref{alg:screening} will terminate after a finite number of iterations.
\end{theorem}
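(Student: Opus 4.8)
The plan is to show that the continuation condition of the while loop, namely $\|R(x^s)\|>\epsilon$, forces the active index set to grow strictly at every iteration. Since each $I^s$ is a subset of the finite set $[n]$ and the cardinalities $|I^s|$ are strictly increasing while bounded above by $n$, the chain $I^0 \subsetneq I^1 \subsetneq \cdots$ can have only finitely many terms, giving termination in at most $n-|I^0|$ iterations. The entire argument thus reduces to the claim flagged in the statement: whenever $\|R(x^s)\|>\epsilon$, the set $J^{s+1}$ defined in \eqref{eq: create_J} is nonempty, so that $\widehat J^{s+1}\neq\emptyset$ and $I^{s+1}\supsetneq I^s$.

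I would establish this claim by contraposition, assuming $J^{s+1}=\emptyset$, i.e. $(R(x^s))_{\overline{I^s}}=0$, and proving $\|R(x^s)\|\le\epsilon$ (so the loop would already have stopped). Write $p:={\rm Prox}_P(x^s-\nabla\Phi(x^s))$, so that $R(x^s)=x^s-p$. Since $(x^s)_{\overline{I^s}}=0$ by construction and $(R(x^s))_{\overline{I^s}}=0$ by assumption, it follows that $p_{\overline{I^s}}=0$; that is, $p$ lies in the subspace $\{x:x_{\overline{I^s}}=0\}$ on which the reduced problem \eqref{eq: reduced_0} is posed. The defining inclusion of the proximal mapping, $x^s-\nabla\Phi(x^s)-p\in\partial P(p)$, can then be restricted to the coordinates in $I^s$: testing the subgradient inequality only against vectors supported on $I^s$ yields $(x^s)_{I^s}-\nabla\Phi^s((x^s)_{I^s})-p_{I^s}\in\partial P^s(p_{I^s})$, equivalently $p_{I^s}={\rm Prox}_{P^s}((x^s)_{I^s}-\nabla\Phi^s((x^s)_{I^s}))$. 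Consequently the full residual collapses onto the reduced residual, $\|R(x^s)\|=\|(x^s)_{I^s}-p_{I^s}\|=\|(x^s)_{I^s}-{\rm Prox}_{P^s}((x^s)_{I^s}-\nabla\Phi^s((x^s)_{I^s}))\|$.

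To finish, I would invoke the characterization from the preceding Proposition, namely $(x^s)_{I^s}={\rm Prox}_{P^s}(\hat z-\nabla\Phi^s(\hat z))$ with $\hat\delta$ as in \eqref{eq: control_delta0} satisfying $\|\hat\delta\|\le\epsilon$. Subtracting the two instances of ${\rm Prox}_{P^s}$ and using that the proximal mapping is nonexpansive (Lipschitz modulus $1$) gives $\|(x^s)_{I^s}-p_{I^s}\|\le\|\hat z-(x^s)_{I^s}+\nabla\Phi^s((x^s)_{I^s})-\nabla\Phi^s(\hat z)\|=\|\hat\delta\|\le\epsilon$, hence $\|R(x^s)\|\le\epsilon$, completing the contraposition. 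The same estimate also disposes of the boundary case $I^s=[n]$, where $\overline{I^s}=\emptyset$ makes $(R(x^s))_{\overline{I^s}}=0$ automatic and thus forces termination before the chain can exceed $[n]$.

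The main obstacle is that $P$ need not be separable, so the full residual $R(x^s)$ and the reduced residual do not decouple coordinatewise a priori. The crux is the observation that the hypothesis $(R(x^s))_{\overline{I^s}}=0$ pins $p={\rm Prox}_P(x^s-\nabla\Phi(x^s))$ inside the reduced subspace, which is precisely what lets the global subgradient inclusion be restricted to $I^s$ and identified with a genuine reduced proximal step. Once that identification is in place, nonexpansiveness together with the error bound \eqref{eq: control_delta0} completes the estimate routinely, and the finiteness conclusion follows from the strict monotonicity of the index sets.
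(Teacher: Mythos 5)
Your proof is correct, but it takes a genuinely different route from the paper's. The paper argues in the full space: it writes the KKT system of the constrained problem in step 3.2 with an explicit multiplier $y$ supported on $\overline{I^s}$, so that $\delta^s + y - \nabla\Phi(x^s)\in\partial P(x^s)$, pairs this with $R(x^s)-\nabla\Phi(x^s)\in\partial P(x^s-R(x^s))$, and applies maximal monotonicity of $\partial P$; the hypothesis $(R(x^s))_{\overline{I^s}}=0$ then kills the term $\langle y, R(x^s)\rangle$ and yields $\|R(x^s)\|^2\le\langle \delta^s,R(x^s)\rangle\le\epsilon\|R(x^s)\|$. You instead restrict to the reduced space: your key observation that the hypothesis forces $p={\rm Prox}_P(x^s-\nabla\Phi(x^s))$ into the range of $M_s$ is exactly what legitimizes testing the subgradient inequality against vectors $M_s w$ (using $M_s^T\nabla\Phi(M_s z)=\nabla\Phi^s(z)$) to get $p_{I^s}={\rm Prox}_{P^s}\bigl((x^s)_{I^s}-\nabla\Phi^s((x^s)_{I^s})\bigr)$, after which nonexpansiveness of ${\rm Prox}_{P^s}$ and the bound $\|\hat\delta\|\le\epsilon$ from \eqref{eq: control_delta0} finish the estimate. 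The two arguments rest on the same underlying fact (monotonicity of the subdifferential, of which prox-nonexpansiveness is a consequence), but yours avoids introducing the multiplier $y$ entirely, and your explicit treatment of the boundary case $I^s=[n]$ and the count of at most $n-|I^0|$ iterations is more careful than the paper's closing sentence. One small caveat: you invoke the Proposition's characterization $(x^s)_{I^s}={\rm Prox}_{P^s}(\hat z-\nabla\Phi^s(\hat z))$ as if every iterate arises this way, whereas the algorithm's definition only posits existence of $\delta^s$ with $\|\delta^s\|\le\epsilon$, $(\delta^s)_{\overline{I^s}}=0$; this is harmless, since the inexact optimality gives $(x^s)_{I^s}={\rm Prox}_{P^s}\bigl((x^s)_{I^s}+(\delta^s)_{I^s}-\nabla\Phi^s((x^s)_{I^s})\bigr)$ directly, and nonexpansiveness against your identity for $p_{I^s}$ yields $\|R(x^s)\|\le\|(\delta^s)_{I^s}\|\le\epsilon$ with no reference to $\hat z$ at all — a one-line simplification of your closing step.
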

\begin{proof}
	We first prove that when $\|R(x^{s})\|>\epsilon\geq 0$ for some $s\geq 0$, the index set $J^{s+1}$ defined in \eqref{eq: create_J} is nonempty. We prove this by contradiction. Suppose that $J^{s+1}=\emptyset$, which means
	\begin{align*}
		(R(x^s))_{\overline{I^{s}}}=0.
	\end{align*}
	According to the definition of $R(\cdot)$ in \eqref{eq:residual}, we have
	\begin{align*}
		R(x^s) - \nabla \Phi(x^s) \in \partial P(x^s-R(x^s)).
	\end{align*}
	Note that $x^{s}$ satisfies
	\begin{align*}
		x^{s} \in \underset{ x\in \mathbb{R}^n} {\arg\min} \  \Big\{\Phi(x)  + P(x) - \langle \delta^{s},x\rangle \ \mid \ x_{\overline{I^s}}=0\Big\},
	\end{align*}
	where $\delta^{s}\in \mathbb{R}^n$ is an error vector such that $(\delta^s)_{\overline{I^s}} = 0$ and $\|\delta^{s}\|\leq \epsilon$. By the KKT condition of the above minimization problem, we know that there exists a multiplier $y\in \mathbb{R}^{n}$ with $y_{I^s}=0$ such that
	\begin{align*}
		\left\{
		\begin{aligned}
			&0\in \nabla \Phi(x^{s}) + \partial P(x^{s})-\delta^{s} - y,\\
			& \Big( x^{s}\Big)_{\overline{I^s}}=0,
		\end{aligned}\right.
	\end{align*}
	which means
	\begin{align*}
		\delta^{s} + y - \nabla \Phi(x^{s})\in \partial P(x^{s}).
	\end{align*}
	By the maximal monotonicity of the operator $\partial P$, we have
	\begin{align*}
		\langle R(x^s) -\delta^s -y ,-R(x^s)\rangle \geq 0,
	\end{align*}
	which implies that
	\begin{align*}
		\|R(x^s)\|^2\leq \langle \delta^s + y,R(x^s)\rangle = \langle (\delta^s)_{I^s}, (R(x^s))_{I^s}\rangle\leq \|\delta^s\| \|R(x^s)\|\leq \epsilon \|R(x^s)\|.
	\end{align*}
	Thus, we get $\|R(x^s)\|\leq \epsilon$, which is a contradiction.
	
	Therefore, we have that for any $s\geq 0$, $J^{s+1}\neq \emptyset$ as long as $\|R(x^{s})\|>\epsilon$, which further implies that $\widehat{J}^{s+1}\neq \emptyset$. In other words, new indices will be added to the index set $I^{s+1}$ as long as the KKT residual has not achieved at the required accuracy. Since the total number of features $n$ is finite, the while loop in Algorithm \ref{alg:screening} will terminate after a finite number of iterations.
\end{proof}

Although we only have the finite termination guarantee for the proposed AS strategy, the superior empirical performance of the AS strategy will be demonstrated later in Section \ref{sec: experiments} with extensive numerical experiments. In particular, the number of AS iterations is no more than 5 (less than 3 for most of the cases) for solving a single sparse optimization problem in our experiments on both synthetic and real datasets.

\section{An efficient path generation method based on the AS strategy}
\label{sec: inexact}
When solving machine learning models in practice, we need to solve the model with a sequence of hyper-parameters and then select appropriate values for the hyper-parameters based on some methodologies, such as cross-validation. In this section, we are going to propose a path generation method based on the AS strategy to obtain solution paths of general sparse optimization models. Specifically, we will borrow the idea of the proposed AS strategy to solve the model \eqref{eq: lasso_model2} for a sequence of $\lambda$ in a highly efficient way.

Before presenting our path generation method, we first briefly discuss the ideas behind the two most popular screening rules for obtaining solution paths of various machine learning models, namely the strong screening rule (SSR) \cite{tibshirani2012strong} and the dual polytope projection based screening rule (DPP) \cite{wang2013lasso,wang2015lasso}. For convenience, we take the Lasso model as an illustration, with $\Phi(x) = \frac{1}{2}\|Ax - b\|^2$ and $P(x) = \|x\|_1$ in \eqref{eq: lasso_model2}, where $A=[a_1,a_2,\cdots,a_n] \in \mathbb{R}^{m\times n}$ is the feature matrix and $b\in \mathbb{R}^m$ is the response vector. Let $x^{*}(\lambda)$ be an optimal solution to \eqref{eq: lasso_model2}. The KKT condition implies that:
\begin{align*}
	a_i^T\theta^*(\lambda) \in \left\{
	\begin{array}{ll}
		\{ \lambda \ {\rm sign}(x^*(\lambda))\} & \mbox{if} \; x^*(\lambda)_i \not= 0 \\
		\left[-\lambda, \lambda\right] & \mbox{if} \; x^*(\lambda)_i = 0\\
	\end{array}
	\right.,
\end{align*}
where $\theta^*(\lambda)$ is the optimal solution of the associated dual problem:
\begin{align}
	\label{eq: dual_problem_lasso}
	\max_{\theta\in \mathbb{R}^m} \ \Big\{\frac{1}{2}\|b\|^2 - \frac{1}{2}\|\theta - b\|^2\; \mid \;|a_i^T\theta| \leq \lambda,\ i = 1, 2, \dots, n\Big\}.
\end{align}
The existing screening rules for the Lasso model are based on the fact that $x^*(\lambda)_i = 0$ if $|a_i^T\theta^*(\lambda)| < \lambda$. The difference is how to estimate $a_i^T\theta^*(\lambda)$ based on an optimal solution $x^*(\tilde{\lambda})$ of $({\rm P}_{\tilde{\lambda}})$ for some $\tilde{\lambda} > \lambda$, without solving the dual problem \eqref{eq: dual_problem_lasso}. The SSR \cite{tibshirani2012strong} discards the $i$-th predictor if
\begin{align*}
	|a_i^T\theta^*(\tilde{\lambda}) | \leq 2\lambda-\tilde{\lambda},
\end{align*}
by assuming the ``unit slope" bound condition: $
|a_i^T\theta^*(\lambda_1) - a_i^T\theta^*(\lambda_2)| \leq |\lambda_1-\lambda_2|$, for all $\lambda_1, \lambda_2 > 0$.
Since this assumption may fail, the SSR may screen out some active features by mistake. Also, the SSR only works for consecutive hyper-parameters with a small gap, since it requires $\lambda > \frac{\tilde{\lambda}}{2}$. Wang et al. \cite{wang2013lasso,wang2015lasso} proposed the DPP by carefully analysing the properties of the optimal solution to the dual problem \eqref{eq: dual_problem_lasso}. The key idea is, if we can estimate a region $\Theta_{\lambda}$ containing $\theta^*(\lambda)$, then
\begin{align*}
	\sup_{\theta \in \Theta_{\lambda}} |a_i^T\theta| < \lambda \quad \Longrightarrow \quad x^*(\lambda)_i = 0.
\end{align*}
They estimate the region $\Theta_{\lambda}$ by realizing that the optimal solution of \eqref{eq: dual_problem_lasso} is the  projection onto a polytope. As we can see, a tighter estimation of $\Theta_{\lambda}$ will induce a better safe screening rule.

The bottlenecks of applying the above screening rules to solve general machine learning models in the form of \eqref{eq: lasso_model} are: 1) the subdifferential of a general regularizer may be much more complicated than the Lasso regularizer, whose subdifferential is separable; 2) a general regularizer may not be positively homogeneous, which means that the optimal solution to the dual problem may not be the projection onto some convex set \cite[Theorem 13.2]{rockafellar1970convex}; 3) These screening rules may only exclude a small portion of zeros in practice even if the solutions of the problem are highly sparse, thus we still need to solve large-scale problems after the screening; 4) they implicitly require the reduced problems to be solved exactly, which is unrealistic for many machine learning models in practice.

\begin{algorithm}[ht]
	\caption{An AS-based path generation method for \eqref{eq: lasso_model2}}
	\label{alg:screening2}
	\begin{algorithmic}[1]
		\STATE \textbf{Input}: a sequence of hyper-parameters: $\lambda_0 > \lambda_1 > \dots > \lambda_k > 0$, and given tolerances  $\epsilon \geq 0$ and $\hat{\epsilon} \geq 0$.
		\STATE \textbf{Output}: a solution path: $x^*(\lambda_0), x^*(\lambda_1), x^*(\lambda_2), \dots, x^*(\lambda_k)$.
		\STATE \textbf{1}. Apply Algorithm \ref{alg:screening} to solve the problem $(\mbox{P}_{\lambda_0})$ with the tolerance $\epsilon$, and then denote the output as $x^*(\lambda_0)$. Let
		\begin{align*}
			I^*(\lambda_0) := \{j \;\mid\; |x^*(\lambda_0)_j| > \hat{\epsilon},\ j=1,\cdots,n\}.
		\end{align*}
		\FOR{$i = 1, 2, \dots, k$}
		\STATE \textbf{2.1}. Let $I^{0}(\lambda_i) = I^*(\lambda_{i-1})$.
		\STATE \textbf{2.2}. Apply Algorithm \ref{alg:screening} to solve the problem $(\mbox{P}_{\lambda_i})$ with the initial index set $I^{0}(\lambda_i)$, the initialization $x^*(\lambda_{i-1})$ and the tolerance $\epsilon$. Denote the output as $x^*(\lambda_i)$.
		\STATE \textbf{2.3}. Let
		\begin{align*}
			I^*(\lambda_i) := \{j \;\mid\; |x^*(\lambda_i)_j| > \hat{\epsilon},\ j=1,\cdots,n\}.
		\end{align*}
		\ENDFOR
	\end{algorithmic}
\end{algorithm}

To overcome these challenges, we propose a path generation method based on the AS strategy for solving the problem \eqref{eq: lasso_model2} presented in Algorithm \ref{alg:screening2}. Here we denote the proximal residual function $R_{\lambda}:\mathbb{R}^n\rightarrow \mathbb{R}^n$ associated with \eqref{eq: lasso_model2} as
\begin{align}
	R_{\lambda}(x):=x-{\rm Prox}_{\lambda P}(x-\nabla \Phi(x)),\quad  x\in \mathbb{R}^n.\label{eq:residual_lambda}
\end{align}
Assume that for any $\lambda > 0$, the solution set of \eqref{eq: lasso_model2} is nonempty and compact.

The design of the above algorithm directly leads us to the following theorem regarding its convergence property. The proof of the theorem can be obtained directly due to Theorem \ref{thm: convergence_whileloop}.

\begin{theorem}\label{thm: convergence_screening}
	The solution path $x^*(\lambda_0), x^*(\lambda_1),\dots, x^*(\lambda_k)$ generated by Algorithm \ref{alg:screening2} forms a sequence of approximate solutions to the problems $(\mbox{P}_{\lambda_0}), (\mbox{P}_{\lambda_1}),$ $\cdots, (\mbox{P}_{\lambda_k})$, in the sense that \[\|R_{\lambda_i}(x^*(\lambda_i))\|\leq \epsilon,\quad  i=0,1,\cdots,k.\]
\end{theorem}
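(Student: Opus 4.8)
The plan is to observe that Algorithm \ref{alg:screening2} solves each problem $(\mbox{P}_{\lambda_i})$ by a self-contained call to the adaptive sieving routine of Algorithm \ref{alg:screening}, so the desired residual bound for the entire path reduces to the output guarantee of a single run of Algorithm \ref{alg:screening}. Concretely, I would argue index by index: fix $i \in \{0,1,\dots,k\}$ and track what Algorithm \ref{alg:screening} produces when it is invoked on $(\mbox{P}_{\lambda_i})$ with tolerance $\epsilon$, as prescribed in Step 1 (for $i=0$) and Step 2.2 (for $i \geq 1$).

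First I would identify the residual function that governs the while loop of Algorithm \ref{alg:screening} during this call. Since $(\mbox{P}_{\lambda_i})$ is the instance of \eqref{eq: lasso_model} with regularizer $\lambda_i P(\cdot)$ in place of $P(\cdot)$, the proximal residual \eqref{eq:residual} specializes exactly to $R_{\lambda_i}$ defined in \eqref{eq:residual_lambda}. Next I would invoke Theorem \ref{thm: convergence_whileloop}, which guarantees that the while loop terminates after finitely many iterations; hence the call returns a well-defined iterate $x^*(\lambda_i) = x^s$. Finally, I would read off the termination condition of the loop: it continues only while $\|R_{\lambda_i}(x^s)\| > \epsilon$, so upon termination the returned point necessarily satisfies $\|R_{\lambda_i}(x^*(\lambda_i))\| \leq \epsilon$, matching the output specification of Algorithm \ref{alg:screening}. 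Letting $i$ range over $0,1,\dots,k$ then yields the claimed bound for every point on the path.

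I do not expect a genuine obstacle here, since the statement is essentially a bookkeeping consequence of Theorem \ref{thm: convergence_whileloop}. The one point that warrants care is confirming that the warm-start initialization in Steps 2.1--2.2 (using $I^*(\lambda_{i-1})$ and $x^*(\lambda_{i-1})$ as the initial index set and initial point) does not affect the output guarantee: Algorithm \ref{alg:screening} accepts an arbitrary initial index set $I^0 \subseteq [n]$ and an arbitrary initialization, and both its finite-termination property and its terminal residual bound from Theorem \ref{thm: convergence_whileloop} hold regardless of these choices. Thus the warm start influences only efficiency, not the correctness of the terminal residual bound, and the argument above goes through verbatim for each $\lambda_i$.
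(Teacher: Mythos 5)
Your proposal is correct and matches the paper's argument: the paper proves Theorem~\ref{thm: convergence_screening} in one line as a direct consequence of Theorem~\ref{thm: convergence_whileloop}, exactly as you do by noting that each call to Algorithm~\ref{alg:screening} on $(\mbox{P}_{\lambda_i})$ terminates finitely with the while-loop exit condition $\|R_{\lambda_i}(x^*(\lambda_i))\|\leq \epsilon$ satisfied. Your additional observations---that the residual \eqref{eq:residual} specializes to $R_{\lambda_i}$ for the regularizer $\lambda_i P(\cdot)$, and that the warm-start choices of $I^0(\lambda_i)$ and the initial point affect only efficiency, not the terminal guarantee---are correct and simply make explicit the bookkeeping the paper leaves implicit.
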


In practice, we usually choose $\hat{\epsilon} = 10^{-10}$. We will compare the numerical performance of the proposed AS strategy to the popular screening rules in Section \ref{sec: experiments}, our experiment results will show that the AS strategy is much more efficient for solving large-scale sparse optimization problems.

\section{Numerical Experiments}
\label{sec: experiments}
In this section, we will present extensive numerical results to demonstrate the performance of the AS strategy \footnote{The code is available at \url{https://doi.org/10.5281/zenodo.15087424}}. We will test the performance of the AS strategy on the sparse optimization problems of the following form:
\begin{equation}
	\label{eq: model-numerical}
	\min_{x \in \mathbb{R}^n} \ \Big\{ h(Ax) + \lambda P(x)\Big\},
\end{equation}
where $A: \mathbb{R}^n \to \mathbb{R}^m$ is a given data matrix, $h: \mathbb{R}^m \to \mathbb{R}$ is a convex, twice continuously differentiable loss function, $P: \mathbb{R}^n \!\to\! (-\infty, +\infty]$ is a proper closed convex regularization function, and $\lambda > 0$ is the hyper-parameter. More specifically, we consider the two most commonly used loss functions for $h(\cdot)$:
\begin{itemize}
	\item[(1)] (linear regression) $h(y)=\sum_{i=1}^m (y_i-b_i)^2/2$, for some given vector $b\in \mathbb{R}^m$;
	\item[(2)] (logistic regression) $h(y) = \sum_{i=1}^m \log(1 + \exp(-b_i y_i))$, for some given vector $b\in \{-1,1\}^m$.
\end{itemize}
For the regularization function $P(\cdot)$, we consider four important regularizers in statistical learning models: Lasso, sparse group lasso, exclusive lasso, and SLOPE. Details of the regularizers can be found in Section \ref{sec: example-regularizers}. The algorithms used for solving each sparse optimization model and its corresponding reduced problems when applying the AS strategy will be specified later in the related subsections. 

In our experiments, we measure the accuracy of the obtained solution by the relative KKT residual:
\begin{align}
	\eta_{\rm KKT} := \frac{\|x -  {\rm Prox}_{\lambda P}(x - A^T\nabla h(Ax))\|}{1 + \|x\| + \|A^T\nabla h(Ax)\|}, \label{eq: eta_kkt}
\end{align}
as suggested in \cite{li2018highly}. Here $\eta_{\rm KKT}$ measures how close the optimality condition of \eqref{eq: model-numerical} is to being met, which is particularly crucial in scenarios where the function values decrease slowly but significant progress towards satisfying the optimality condition is being made. We terminate the tested algorithm when $\eta_{\rm KKT}\leq \varepsilon$, where $\varepsilon > 0$ is a given tolerance, which is set to be $10^{-6}$ by default. We choose $\hat{\epsilon} = 10^{-10}$ in Algorithm \ref{alg:screening2}. All our numerical results are obtained by running MATLAB(2022a version) on a Windows workstation  (Intel Xeon E5-2680 @2.50GHz).

The numerical experiments will be organized as follows. First, we present the numerical performance of the AS strategy on the mentioned models on synthetic data sets in Section \ref{sec:linear_synthetic} and Section \ref{sec:logistic_synthetic}. To better demonstrate the effectiveness and flexibility of our AS strategy, we compare it with other existing approaches for the path generation in Section \ref{sec: allscreeningrules}, and test its performance with its reduced problems solved by different algorithms in Section \ref{sec: algo_com}. Finally, we present the numerical results on nine real data sets in Section \ref{sec:real}. 

\subsection{Performance of the AS strategy for linear regression on synthetic data}
\label{sec:linear_synthetic}
In this subsection, we conduct numerical experiments to demonstrate the performance of the AS strategy for linear regression on synthetic data. 

We first describe the generation of the synthetic data sets. We consider the models with group structure (e.g., sparse group lasso and exclusive lasso) and without group structure (e.g., Lasso and SLOPE). For simplicity, we randomly generate data with groups and test all the models on these data. Motivated by \cite{campbell2017within}, we generate the synthetic data using the model $b = Ax ^* +\xi$, where $x ^*$ is the predefined true solution and $\xi \sim \mathcal{N}(0, I_m)$ is a random noise vector. Given the number of observations $m$, the number of groups $g$ and the number of features $p$ in each group, we generate each row of the matrix $A \in \mathbb{R}^{m \times gp}$ by independently sampling a vector from a multivariate normal distribution $\mathcal{N}(0, \Sigma)$, where $\Sigma$ is a Toeplitz covariance matrix with entries $\Sigma_{ij} = 0.9^{|i - j|}$ for the features in the same group, and $\Sigma_{ij} = 0.3^{|i - j|}$ for the features in different groups. For the ground-truth $x^{*}$, we randomly generate $r$ nonzero elements in each group with i.i.d values drawn from the uniform distribution on $[0,10]$. In particular, we test the case where $r = \lfloor 0.1\% \times p \rfloor$. More experiments for the case where $r = \lfloor 1\% \times p \rfloor$ are given in Appendix \ref{sec: other_sparsity}.

Here, we mainly focus on solving the regression models in high-dimensional settings. In our experiments, we set $m \in \{500, 1000, 2000\}$. For the number of features, for simplicity, we fix $g$ to be $20$, but vary the number of features $p$ in each group from $5000$ to $60000$. That is, we vary the total number of features $n=gp$ from $100000$ to $1200000$.

\subsubsection{Numerical results on Lasso linear regression problems}
\label{sec: lasso-linear-synthetic}

In this subsection, we present the numerical results of the Lasso linear regression model. We will apply the state-of-the-art semismooth Newton based augmented Lagrangian (SSNAL) method\footnote{\url{https://github.com/MatOpt/SuiteLasso}} to solve the Lasso model and its corresponding reduced problems generated by the AS strategy. The comparison of the efficiency between the SSNAL method and other popular algorithms for solving Lasso models can be found in \cite{li2018highly}. Following the numerical experiment settings in \cite{li2018highly}, we test the numerical performance of the AS strategy for generating a solution path for the Lasso linear regression model with $\lambda = \lambda_c\|A^T b\|_{\infty}  $, where $\lambda_c$ is taken from $10^{-1}$ to $10^{-4}$ with 20 equally divided grid points on the $\log_{10}$ scale.

We show the numerical results in Figure \ref{fig: lasso_linear_allsize}, where the performance of the AS strategy and the well-known warm-start technique for generating solution paths of Lasso linear regression problems are compared with different problem sizes $(m,n)$. From the numerical results, we can see that the AS strategy can significantly reduce the computational time of path generation. In particular, the AS strategy can accelerate the SSNAL method up to $\mathbf{43}$ times for generating solution paths for the Lasso linear regression model. Moreover, the AS strategy scales efficiently as the number of features increases, which makes it especially effective for handling high-dimensional data.

\begin{figure}[H]
	\centering
	\includegraphics[width = 1\columnwidth]{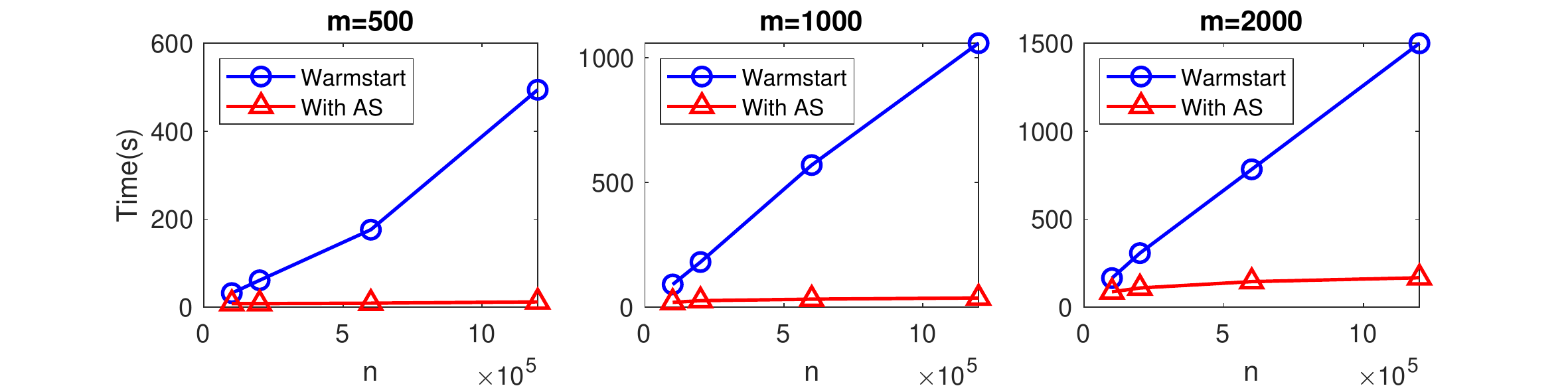}
	\caption{Performance of the AS strategy applied to the generation of solution paths for the Lasso linear regression model on synthetic data sets with different problem sizes.}
	\label{fig: lasso_linear_allsize}
\end{figure}

\begin{figure}[H]
	\centering
	\subfigure[Selected active features]{
		\includegraphics[width = 0.65\columnwidth]{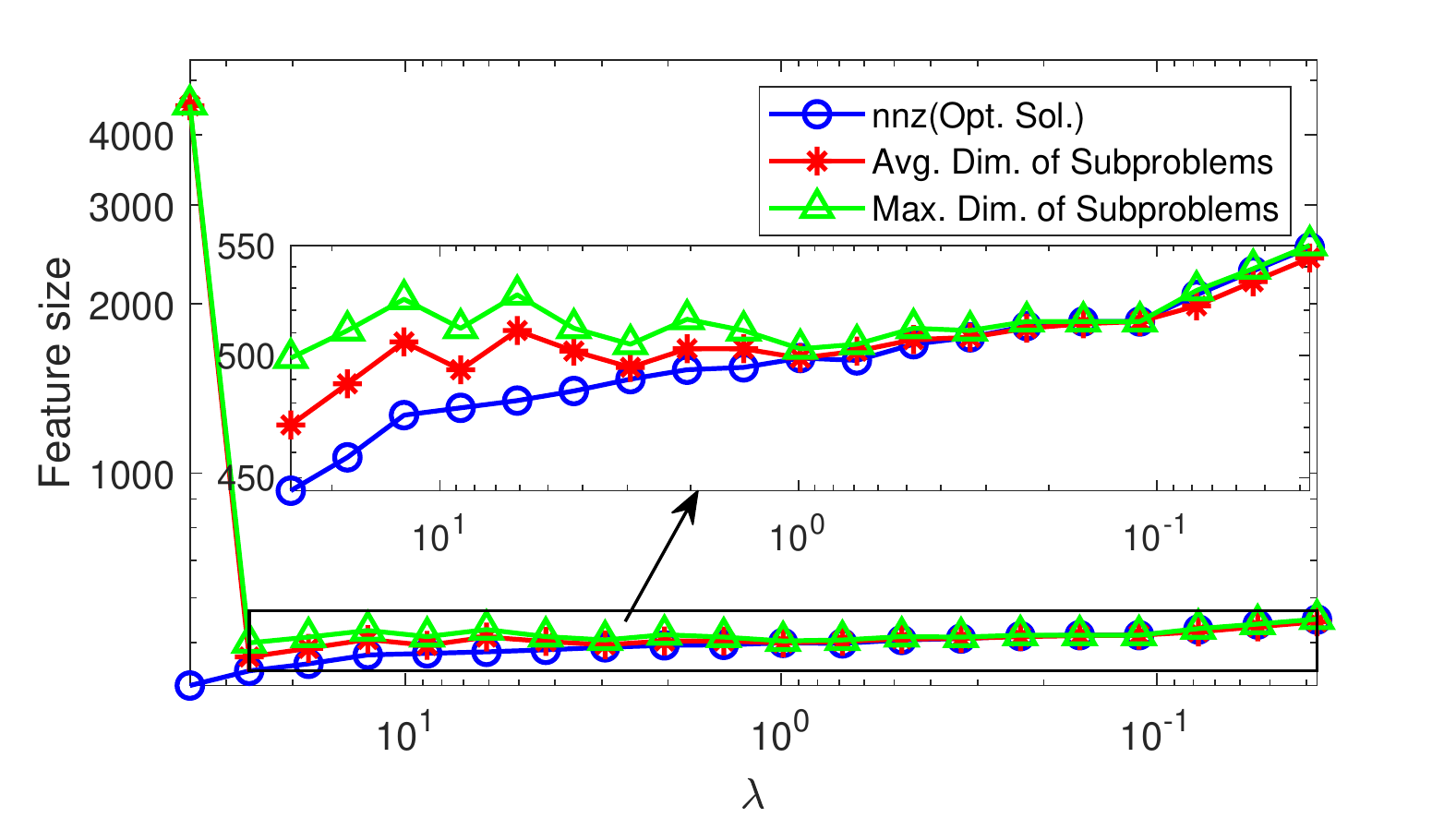}\label{fig: lasso_linear_AS1_1}}
	\hspace{-0.5cm}
	\subfigure[Number of sieving rounds]{\raisebox{5pt}{\includegraphics[width = 0.28\columnwidth]{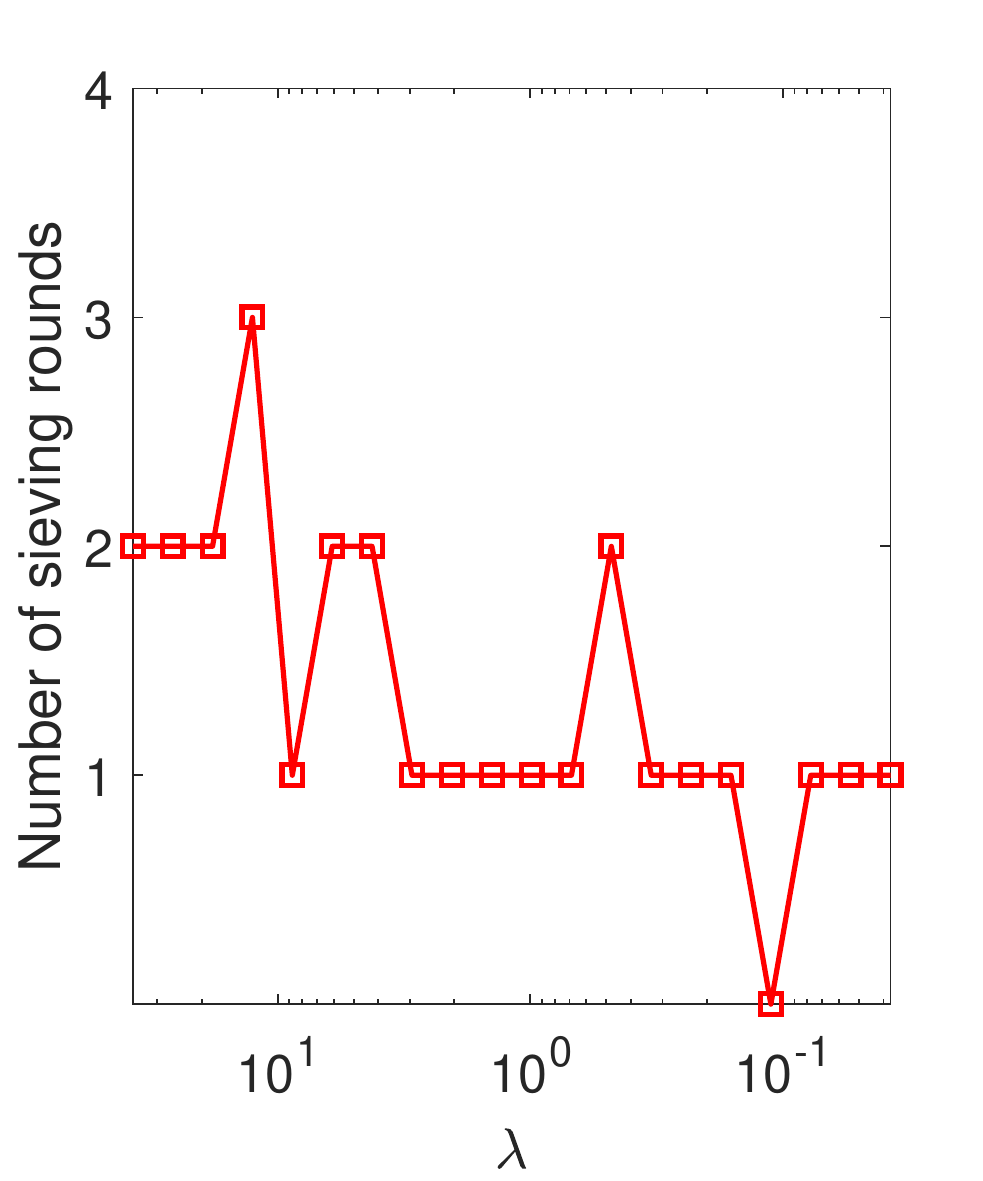}}\label{fig: lasso_linear_AS1_2}}
	\caption{Performance profile of the AS strategy on the Lasso linear regression model for the case when $m=500,n=100,000$.}
	\label{fig: lasso_linear_AS1}
\end{figure}

In order to further demonstrate the power of the AS strategy, in Figure \ref{fig: lasso_linear_AS1_1}, we plot the average (and maximum) problem size of the reduced problems in the AS strategy against the number of nonzero entries of the optimal solutions in the solution path for the case when $m=500,n=100,000$. We can see that the average reduced problem sizes in the AS strategy can nearly match the actual number of nonzeros in the optimal solutions, except for the first problem with the largest $\lambda$. This clearly demonstrates the dimensionality reduction achieved by the AS strategy when solving sparse optimization problems, especially in high-dimensional settings. In addition, Figure \ref{fig: lasso_linear_AS1_2} shows that we only need no more than three rounds of sieving in the AS strategy to solve the problem for each $\lambda$, which demonstrates that our sieving technique is quite effective in practice.

\subsubsection{Numerical results on exclusive lasso linear regression problems}
\label{sec: exclusive-lasso-linear-synthetic}
In this subsection, we present the numerical results of the exclusive lasso linear regression model. We apply the state-of-the-art dual Newton based proximal point algorithm (PPDNA) \cite{lin2023ahighly} to solve the exclusive lasso model and its corresponding reduced problems generated by the AS strategy. The comparison of the efficiency between the PPDNA and other popular algorithms for solving exclusive lasso models can be found in \cite{lin2023ahighly}. We follow the numerical experiment settings in \cite{lin2023ahighly} and test the numerical performance of generating a solution path for the exclusive lasso regression model with $\lambda = \lambda_c\|A^T b\|_{\infty}$ and $w$ being the vector of all ones. Here, $\lambda_c$ is taken from $10^{-1}$ to $10^{-4}$ with 20 equally divided grid points on the $\log_{10}$ scale. 

\begin{figure}[H]
	\centering
	\includegraphics[width = 1\columnwidth]{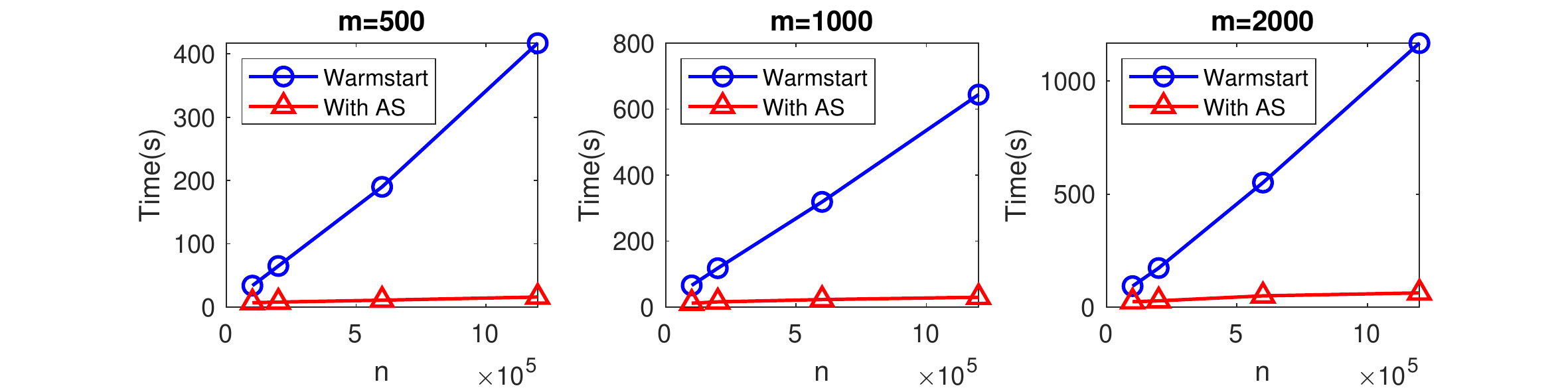}
	\caption{Performance of the AS strategy applied to the generation of solution paths for the exclusive lasso linear regression model on synthetic data sets with different problem sizes.}
	\label{fig: exlasso_linear_allsize}
\end{figure}

We summarize the numerical results on the path generation of the exclusive lasso linear regression model with different problem sizes in Figure \ref{fig: exlasso_linear_allsize}. We can see that the AS strategy gives excellent performance in generating the solution paths for the exclusive lasso linear regression models, in the sense that it accelerates the PPDNA algorithm up to \textbf{30} times. In addition, the average (and maximum) problem sizes of the reduced problems against the number of nonzeros in the optimal solutions for the case when $m=500,n=100,000$ is shown in Figure \ref{fig: exlasso_linear_AS1_1}. The number of sieving rounds for this case is also provided in Figure \ref{fig: exlasso_linear_AS1_2}. We can see that AS is able to highly reduce the dimensions of the optimization problems along the solution paths.

More experiments on the numerical performance of the AS strategy when solving the linear regression models with sparse group lasso regularizer and SLOPE regularzier are shown in Appendix \ref{sec: sparse-group-lasso-linear-regression-synthetic} and Appendix \ref{sec: SLOPE-linear-regression-synthetic}, respectively.

\begin{figure}[H]
	\subfigure[Selected active features]{
		\includegraphics[width = 0.65\columnwidth]{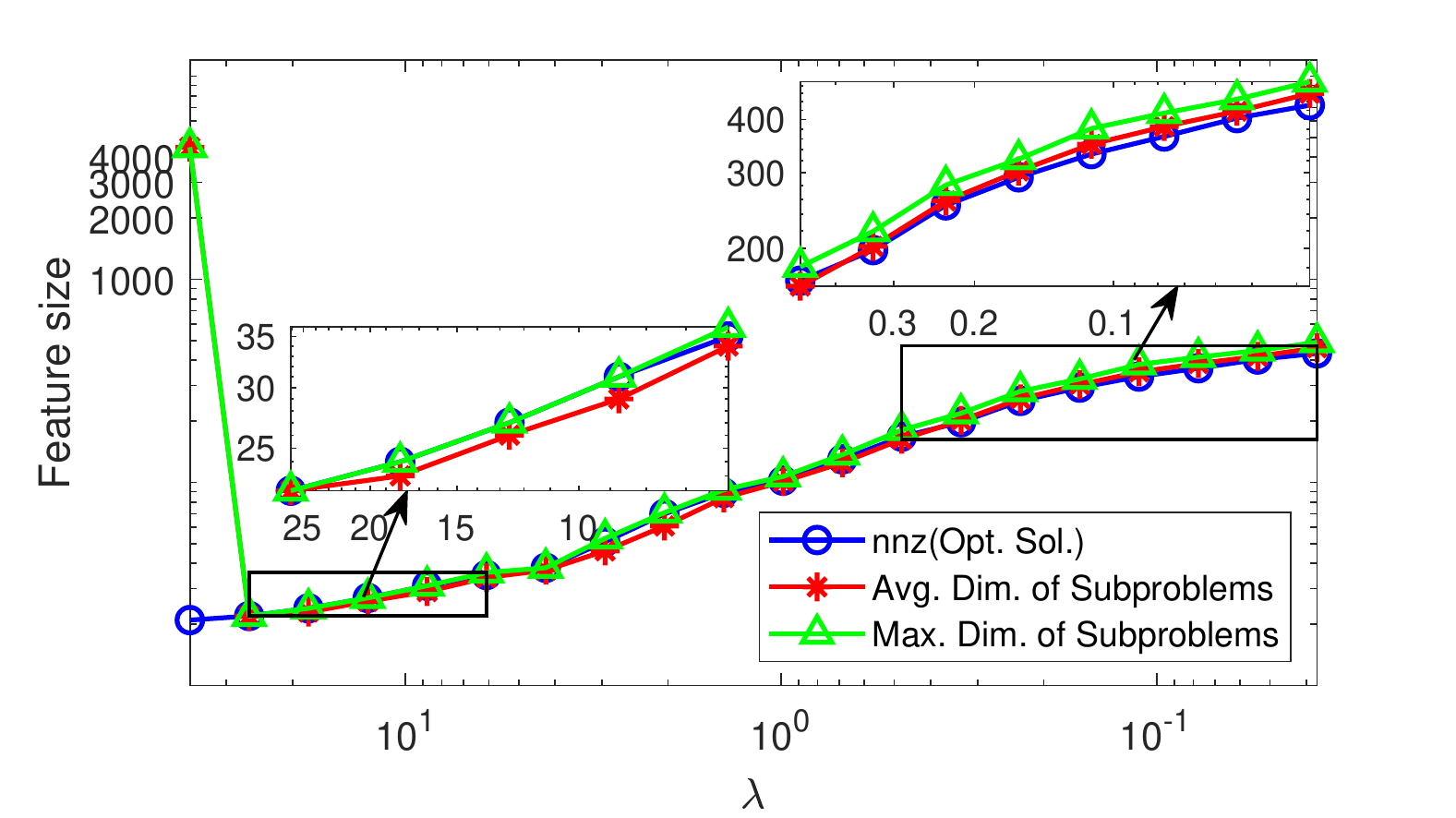}\label{fig: exlasso_linear_AS1_1}}
	\hspace{-0.5cm}
	\subfigure[Number of sieving rounds]{\raisebox{5pt}{\includegraphics[width = 0.28\columnwidth]{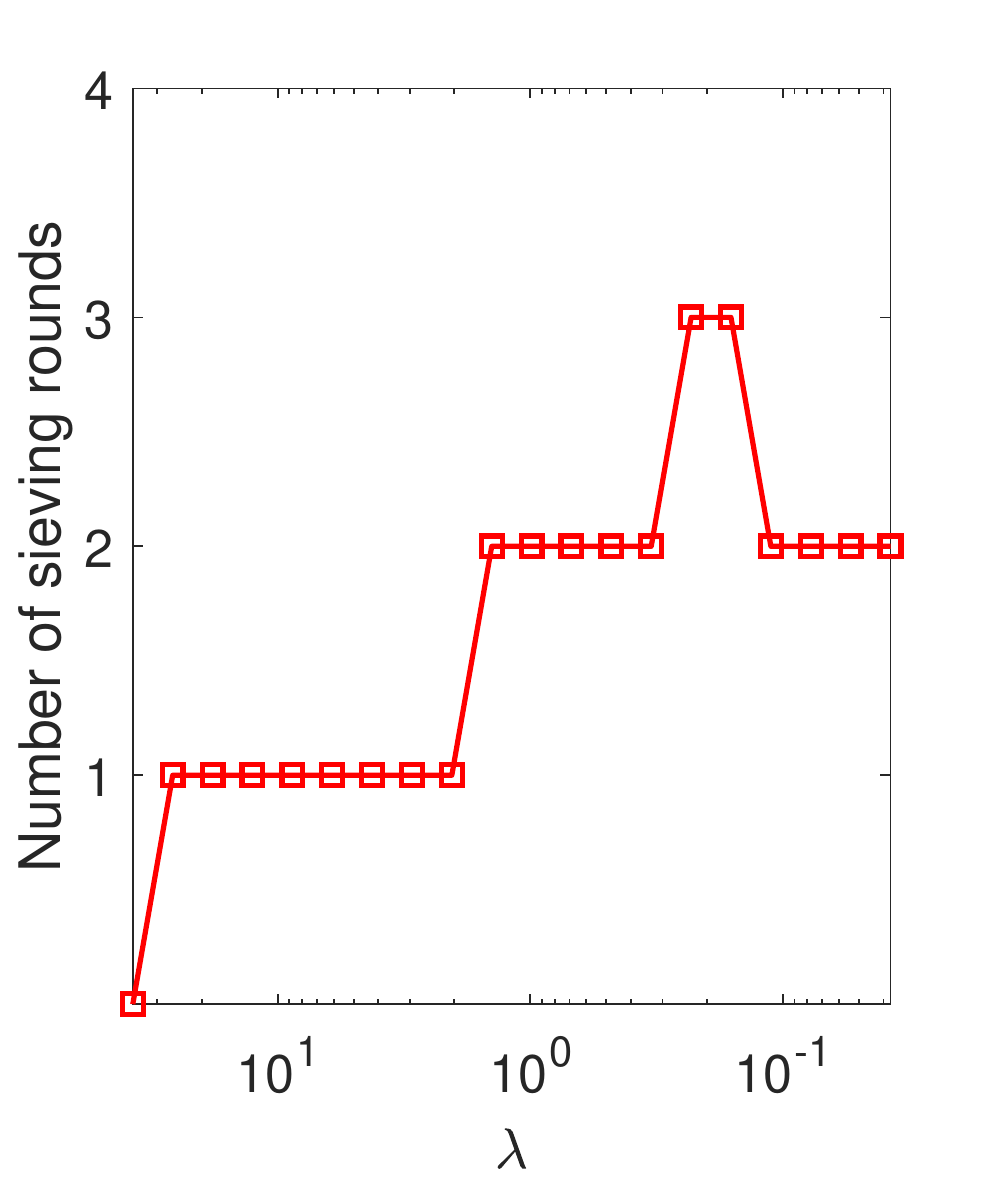}}\label{fig: exlasso_linear_AS1_2}}
	\caption{Performance profile of the AS strategy on the exclusive lasso linear regression model for the case when $m=500,n=100,000$.}
	\label{fig: exlasso_linear_AS1}
\end{figure}

\subsection{Performance of the AS strategy for logistic regression on synthetic data}
\label{sec:logistic_synthetic}

To test the regularized logistic regression problem, we generate $A$ and $x^*$ following the same settings as in Section \ref{sec:linear_synthetic} and define $b_i=1$ if $Ax^* +\tilde{\xi}\geq 0$, and $-1$ otherwise, where $\tilde{\xi} \sim \mathcal{N}(0, I_m)$.

\begin{table}[H]
	\caption{Numerical performance of the AS strategy applied to the generation of 
		solution paths (20 different $\lambda$'s) for the Lasso logistic regression model on synthetic data sets. In the table, `S. Rnd' represents the total number of the AS rounds during the path generation, `Avg. D.' (`Max. D.') denotes the average (maximum) dimension of the reduced problems on the solution path.}
	\label{tab: AS-PPDNA-logistic-lasso-Tab1}
	\renewcommand\arraystretch{1.2}
	\centering
	\begin{tabular}{|c|r|c @{\ $|$\ } c|c|c @{\ $|$\ } c|}  
		\hline
		& & \multicolumn{2}{c|}{Total time (hh:mm:ss)}  & \multicolumn{3}{c|}{Information of the AS} \\
		\hline
		$m$  & $n (= g \times p)$  & Warmstart & With AS  & S. Rnd & Avg. D. & Max. D. \\
		\hline
		\multirow{4}*{\tabincell{c}{$500$}}
		& $100,000$ & 00:02:30 & 00:00:06 & 23  & 564 & 3170 \\
		& $200,000$ & 00:04:18 & 00:00:05 & 18  & 622 & 4473 \\
		& $600,000$ & 00:11:40 & 00:00:08 & 19  & 764 & 7752 \\
		& $1,200,000$ & 00:23:06 & 00:00:11 & 20  & 668 & 10951 \\
		\hline
		\multirow{4}*{\tabincell{c}{$1000$}}
		& $100,000$ & 00:05:55 & 00:00:12 & 23  & 843 & 3227 \\
		& $200,000$ & 00:10:12 & 00:00:14 & 22  & 949 & 4512 \\
		& $600,000$ & 00:28:29 & 00:00:21 & 20  & 1119 & 7763 \\
		& $1,200,000$ & 00:53:27 & 00:00:28 & 18  & 1335 & 10954 \\
		\hline
		\multirow{4}*{\tabincell{c}{$2000$}}
		& $100,000$ & 00:17:27 & 00:01:12 & 26  & 1237 & 3251 \\
		& $200,000$ &  00:26:19 & 00:01:15 & 25  & 1553 & 4656 \\
		& $600,000$ &  01:09:52 & 00:01:23 & 22  & 1956 & 7859 \\
		& $1,200,000$ & 02:01:53 & 00:01:42 & 19  & 2282 & 11010 \\
		\hline
	\end{tabular}
\end{table}

\subsubsection{Numerical results on Lasso logistic regression problems}

We present the results on the Lasso logistic regression model, where the (reduced) problems are again solved by the SSNAL method \cite{li2018highly}. We test the performance on generating a solution path for the Lasso logistic regression model with $\lambda =\lambda_c \|A^T b\|_{\infty} $, where $\lambda_c$ is taken from $10^{-1}$ to $10^{-4}$ with 20 equally divided grid points on the $\log_{10}$ scale. The detailed numerical results are shown in Table \ref{tab: AS-PPDNA-logistic-lasso-Tab1}, where the performance of the AS strategy and the well-known warm-start technique for generating solution paths of Lasso logistic regression problems are compared on the problems with different sizes $(m,n)$. For better illustration of the AS performance, we also list the total number of sieving rounds, together with the average (maximum) dimension of the reduced problems on the solution path.

We can see from the table that the AS strategy also works extremely well for solving the Lasso logistic regression problems, in the sense that it reduces the problem size by a large margin for each case and highly accelerates the computation of the path generation.

\subsubsection{Numerical results on exclusive lasso logistic regression problems}
In this subsection, we present the numerical results on the exclusive lasso logistic regression model, where the (reduced) problems are solved by the PPDNA \cite{lin2023ahighly}. We test the numerical performance of the AS strategy for generating a solution path for the exclusive lasso logistic regression model with uniform weights and $\lambda = \lambda_c \|A^T b\|_{\infty}$, where $\lambda_c$ is taken from $10^{-1}$ to $10^{-4}$ with 20 equally divided grid points on the $\log_{10}$ scale. 

The results are presented in Table \ref{tab: AS-PPDNA-logistic-exclusive-lasso-Tab1}. As demonstrated, our AS strategy once again exhibits excellent performance in significantly reducing the problem sizes during the path generation of the exclusive lasso logistic regression problems. This reduction clearly leads to a noticeable acceleration in computation, further highlighting the efficiency and scalability of our approach.

\begin{table}[H]
	\setlength{\belowcaptionskip}{-1pt}
	\caption{Numerical performance of the AS strategy applied to the generation of solution paths
	(20 different $\lambda$'s)  for the exclusive lasso logistic regression model on synthetic data sets.}
	\label{tab: AS-PPDNA-logistic-exclusive-lasso-Tab1}
	\renewcommand\arraystretch{1.2}
	\centering
	\begin{tabular}{|c|r|c @{\ $|$\ } c|c|c @{\ $|$\ } c|}  
		\hline
		& & \multicolumn{2}{c|}{Total time (hh:mm:ss)}  & \multicolumn{3}{c|}{Information of the AS} \\
		\hline
		$m$  & $n (= g \times p)$  & Warmstart & With AS  & S. Rnd & Avg. D. & Max. D. \\
		\hline
		\multirow{4}*{\tabincell{c}{$500$}}
		& $100,000$ & 00:01:03 & 00:00:08 & 31  & 306 & 3161 \\
		& $200,000$ & 00:01:47 & 00:00:10 & 34  & 385 & 4471 \\
		& $600,000$ & 00:04:41 & 00:00:13 & 30  & 443 & 7751 \\
		& $1,200,000$ & 00:10:16 & 00:00:19 &  30  & 497 & 10951 \\
		\hline
		\multirow{4}*{\tabincell{c}{$1000$}}
		& $100,000$ & 00:01:36 & 00:00:21 & 36  & 429 & 3161 \\
		& $200,000$ & 00:03:03 & 00:00:27 & 34  & 566 & 4471 \\
		& $600,000$ & 00:07:26 & 00:00:35 & 36  & 623 & 7751 \\
		& $1,200,000$ & 00:14:44 & 00:00:48 & 34  & 702 & 10951 \\
		\hline
		\multirow{4}*{\tabincell{c}{$2000$}}
		& $100,000$ & 00:02:58 & 00:01:16 & 36  & 499 & 3161 \\
		& $200,000$ & 00:05:12 & 00:01:36 & 37  & 661 & 4471 \\
		& $600,000$ &  00:13:30 & 00:02:11 & 38  & 899 & 7751 \\
		& $1,200,000$ & 00:26:45 & 00:02:40 & 38  &  1038 & 10951 \\
		\hline
	\end{tabular}
\end{table}

\subsection{Comparison between AS and other approaches}
\label{sec: allscreeningrules}
To further demonstrate the superior performance of the AS strategy, we will compare it with other popular approaches on the generation of solution paths for the Lasso linear regression models, which are implemented in high-quality publicly available packages. Specifically, we compare AS(+SSNAL) with the following approaches.
\begin{itemize}
	\item Matlab's CD: The coordinate descend method  \cite{friedman2010regularization}, with its Matlab implementation in built-in ``lasso" function in the Statistics and Machine Learning Toolbox.
	\item EDPP: The Enhanced Dual Projection onto Polytope (EDPP) rule implemented in the DPC Package\footnote{\url{http://dpc-screening.github.io/lasso.html}}, where the inner problems are solved by the function ``LeastR" in the solver SLEP \cite{liu2009slep}.
	\item FPC\_AS: The active-set based fixed point continuation method \cite{wen2010fast}.
\end{itemize}
Note that the above approaches are terminated by different stopping criteria: AS is terminated if $\eta_{\rm KKT}$ in \eqref{eq: eta_kkt} is no greater than a given tolerance $\varepsilon$; Matlab's CD terminates when successive estimates of $x$ differ in the $\ell_2$ norm by a relative amount less than {\tt RelTol}; EDPP is terminated if the relative successive estimates of $x$ is no greater than {\tt Tol}; and FPC\_AS terminates if the maximum norm of sub-gradient is smaller than {\tt gtol}. 

For fair comparison, we test each approach with two choices of tolerances, and then compare their running time as well as the relative objective function values against the benchmark. Here we set the benchmark as $f^*:=f(x^*) = \|Ax^*-b\|^2/2+\lambda \|x^*\|_1$, where $x^*$ is the solution obtained by the AS with a high accuracy ($\varepsilon=10^{-8}$). The relateive objective function value associated with an approximate solution $x$ is defined as $(f(x)-f^*)/f^*$. If this quantity is smaller, it means that the corresponding approximate solution is better.

The numerical comparison among the four approaches in generating solution paths for the Lasso model of size $(m,n)=(500,100000)$ with $\lambda_c$ decreasing from $1$ to $10^{-4}$ is shown in Table \ref{tab: random_lasso_path} and Figure \ref{fig: lasso_linear_path}. It can be seen from the results that, the AS strategy gives great performance on path generation, as it achieves higher accuracy solutions in less time comparing with the existing approaches. In addition, from the experimental results, we can see that, as the tolerance becomes more stringent, our AS strategy exhibits strong scalability in terms of running time.

\begin{table}[H]
	\caption{Running time of different approaches in generating solution paths for the Lasso model of size $(m,n)=(500,100000)$ with $\lambda_c$ decreasing from $1$ to $10^{-4}$.}
	\label{tab: random_lasso_path}
	\renewcommand\arraystretch{1.2}
	\centering
	\begin{tabular}{|c|c|c|c|c|c|c|c|}  
		\hline
		\multicolumn{2}{|c|}{AS}  & \multicolumn{2}{c|}{Matlab's CD}  & \multicolumn{2}{c|}{EDPP} & \multicolumn{2}{c|}{FPC\_AS} \\
		\hline
		$\varepsilon$ & Time & {\tt RelTol} & Time & {\tt Tol} & Time & {\tt gtol} & Time\\
		\hline 
		1e-6 & 00:00:09 & 1e-3 & 00:00:09 & 1e-4 & 00:01:12 & 1e-6 & 00:01:06\\
		\hline 
		1e-7 & 00:00:11 & 1e-4 & 00:00:27 & 1e-5 & 00:06:25 & 1e-7 & 00:04:28\\
		\hline 
	\end{tabular}
\end{table}

\vspace{-0.8cm}
\begin{figure}[H]
	\includegraphics[width = 1.08\columnwidth]{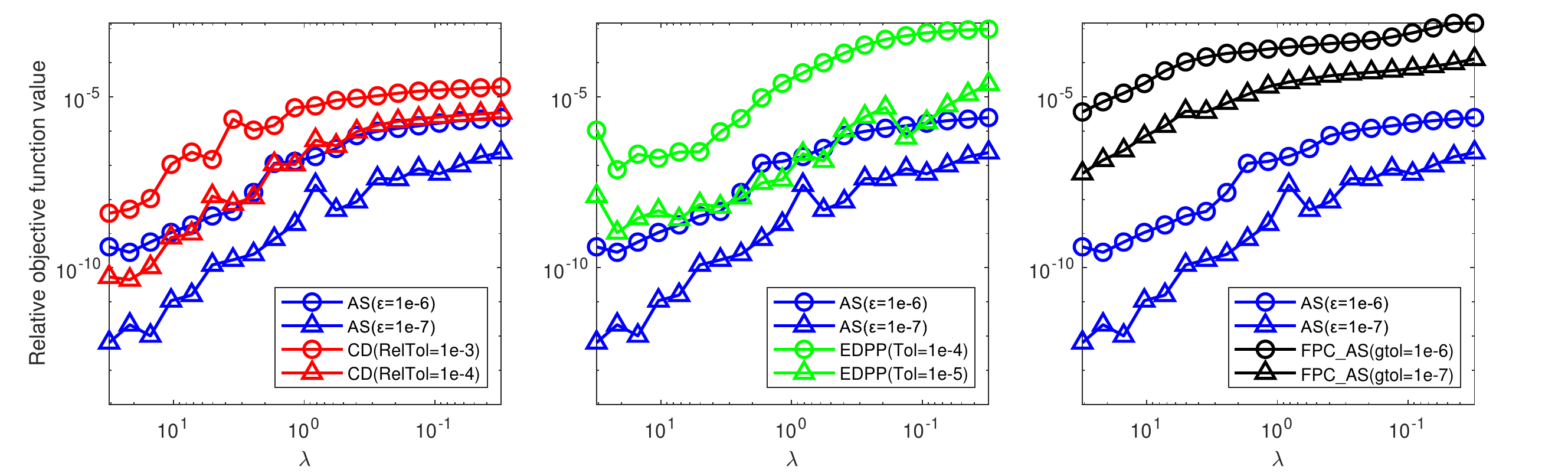}
	\setlength{\abovecaptionskip}{-10pt}
	\caption{Comparison among different approaches in generating solution paths for the Lasso model of size $(m,n)=(500,100000)$ with $\lambda_c$ decreasing from $1$ to $10^{-4}$.}
	\label{fig: lasso_linear_path}
\end{figure}

\subsection{Flexibility of the AS strategy}
\label{sec: algo_com}

In this subsection, we combine our AS strategy with different algorithms for solving the reduced problems to test its generality and flexibility. The path generation for the Lasso model is taken as an example. 

We incorporate the AS strategy with the SSNAL method, the alternating direction method of multipliers (ADMM) \cite{eckstein1992douglas,glowinski1975approximation}, the accelerated proximal gradient (APG) method \cite{beck2009fast}, and the coderivative-based generalized Newton method (GRNM) \cite{khanh2024globally}, separately. We stop the tested algorithm for solving each reduced problem if $\eta_{\rm KKT}\leq 10^{-6}$, the computation time exceeds 30 minutes, or the pre-set maximum number of iterations (100 for SSNAL, GRNM, and 20000 for ADMM, APG) is reached.

The numerical performance of the AS strategy combined with SSNAL, ADMM, APG and GRNM for solving the Lasso model of problem size $(m,n)=(500,100000)$ with $\lambda_c$ decreasing from $1$ to $10^{-4}$ is shown in Figure \ref{fig: lasso_linear_alg}. For illustration purpose, we also present the running time of each algorithm together with the warm-start techique for the path generation. As one can see from the figure, the AS strategy is highly efficient for generating solution paths of the Lasso models. Moreover, it can be incorporated with different algorithms for solving the reduced problems, and can consistently accelerate the computation. For example, when generating the solution path using the APG algorithm together with the warm-start technique, it takes a few hundred seconds for each $\lambda$, while with the AS technique, it takes less than one second. Additionally, the GRNM together with the warm-start technique is not able to solve the problems to the required accuracy within the specified time, while with the AS strategy, it is able to solve the problem for each $\lambda$ within tens of seconds. The improvement comes from the fact that GRNM needs to solve an $n\times n$ linear system for the Newton step, which is costly for high-dimensional problems. Moreover, since $n\gg m$, $A^TA$ becomes singular, leading to ill-conditioned (damped/regularized) Newton systems. In contrast, the AS strategy solves reduced, lower-dimensional problems, greatly speeding up GRNM and making it practical for high-dimensional tasks.

\vspace{-0.4cm}
\begin{figure}[H]
	\hspace{-0.3cm}
	\includegraphics[width = 1.15\columnwidth]{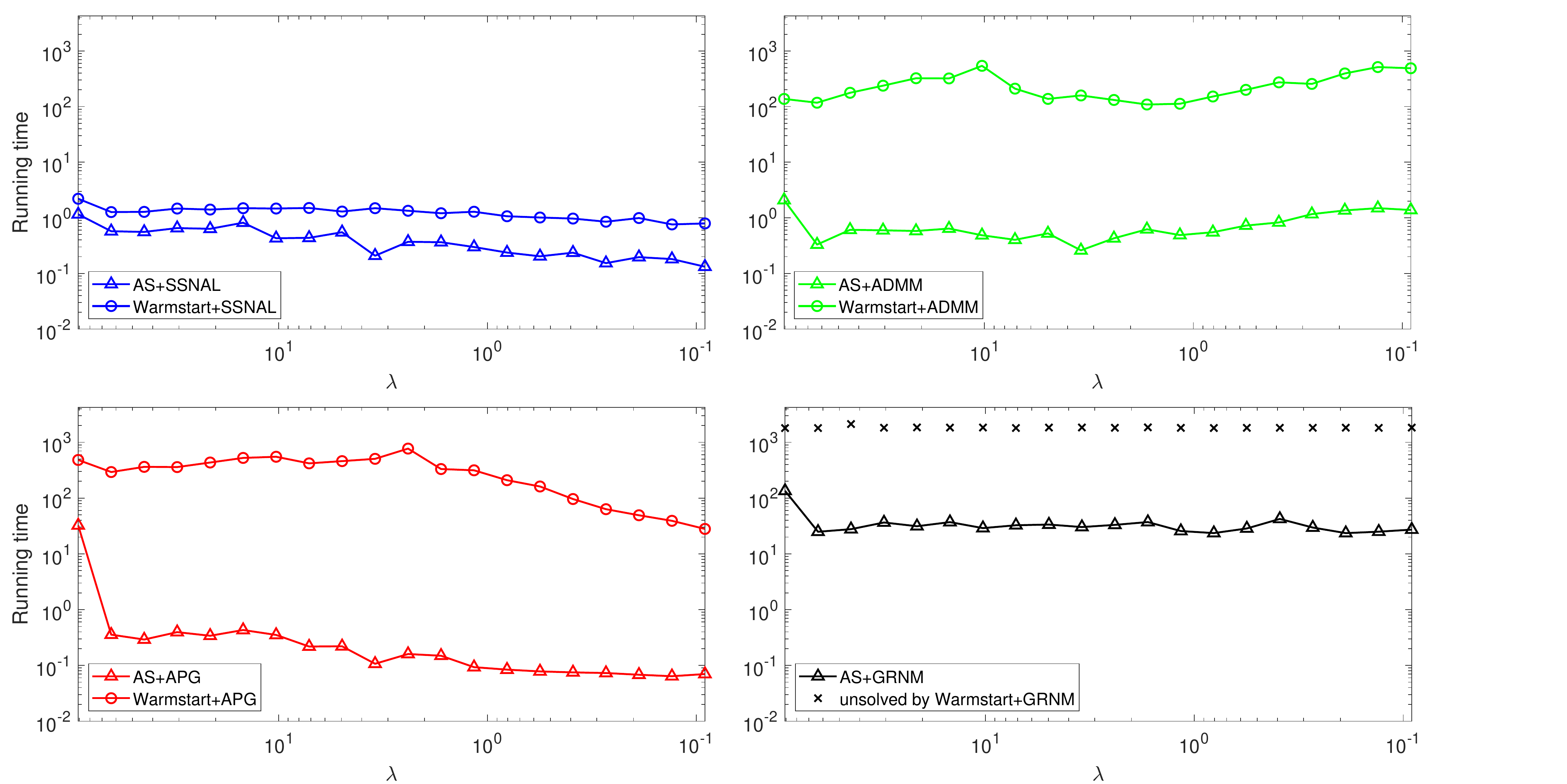}
	\setlength{\abovecaptionskip}{-5pt}
	\setlength{\belowcaptionskip}{-10pt}
	\caption{The combination of the AS strategy with different algorithms for solving the reduced problems during the path generation for the Lasso model of size $(m,n)=(500,100000)$. The maker ``{\tiny $^\times$}'' in the plot means that the problem is not solved to the required accuracy.}
	\label{fig: lasso_linear_alg}
\end{figure}

\subsection{Performance of the AS strategy on real data sets}
\label{sec:real}
In this subsection, we present the numerical performance of the AS strategy on the real data sets for the aforementioned four linear regression models. We test the AS strategy for the Lasso model and the SLOPE model on eight UCI data sets, where the group information is not required. In addition, we test the AS strategy for the sparse group lasso model and the exclusive lasso model on one climate data set, where the group information is available.

\begin{table}[H]
	\caption{Details of the UCI test data. $\lambda_{\max}(\cdot)$ denotes the largest eigenvalue.}
	\label{tab:uci_datasets}
	\renewcommand\arraystretch{1.2}
	\centering
	\begin{tabular}{|c|c|c|} \hline
		Name & $(m, n)$ &  $\lambda_{\max}(AA^T)$\\ \hline
		abalone7 & $(4177, 6435)$ & 5.21e+5\\ \hline
		bcTCGA & $(536, 17322)$ & 1.13e+7 \\ \hline
		bodyfat7 & $(252, 116280)$ & 5.29e+4\\ \hline
		housing7 & $(506,77520)$ & 3.28e+5 \\ \hline
		mpg7 & $(392,3432)$ & 1.28e+4 \\ \hline
		pyrim5 & $(74,201376)$ & 1.22e+6 \\ \hline
		space-ga9 & $(3107, 5005)$ & 4.01e+3 \\ \hline
		triazines4 & $(186, 635376)$ & 2.07e+7 \\ \hline
	\end{tabular}
\end{table}

\paragraph{Test instances from the UCI data repository.} We test eight data sets from the UCI data repository \cite{chang2011libsvm}. The details are summarized in Table \ref{tab:uci_datasets}.

Following the settings in \cite{li2018highly}, we expand the original features by using polynomial basis functions over those features for the data sets pyrim, triazines, abalone, bodyfat, housing, mpg, and space-ga. For instance, the digit $5$ in the name pyrim5 means that an order 5 polynomial is used to generate the basis functions.

\paragraph{NCEP/NCAR reanalysis 1 dataset.} The data set \cite{kalnay1996ncep} contains the monthly means of climate data measurements spread across the globe in a grid of $2.5^{o} \times 2.5^{o}$ resolutions (longitude and latitude 144×73) from 1948/1/1 to 2018/5/31. Each grid point (location) constitutes a group of 7 predictive variables (Air Temperature, Precipitable Water, Relative Humidity, Pressure, Sea Level Pressure, Horizontal Wind Speed and Vertical Wind Speed). Such data sets have two natural group structures: (i) groups over locations: $144 \times 73$ groups, where each group is of length 7; (ii) groups over features: 7 groups, where each group is of length 10512. For both cases, the corresponding data matrix A is of dimension $845 \times 73584$.

\subsubsection{Numerical results on Lasso linear regression model}
In this subsection, we present the numerical results of the AS strategy for the Lasso linear regression model on the test instances from the UCI data repository.

\begin{table}[H]
	\caption{Running time of different approaches in generating solution paths for the Lasso model on mpg7 data set with $\lambda_c$ decreasing from $1$ to $10^{-4}$.}
	\label{tab: mpg_lasso_path}
	\renewcommand\arraystretch{1.2}
	\centering
	\begin{tabular}{|c|c|c|c|c|c|c|c|}  
		\hline
		\multicolumn{2}{|c|}{AS}  & \multicolumn{2}{c|}{Matlab's CD}  & \multicolumn{2}{c|}{EDPP} & \multicolumn{2}{c|}{FPC\_AS} \\
		\hline
		$\varepsilon$ & Time & {\tt RelTol} & Time & {\tt Tol} & Time & {\tt gtol} & Time\\
		\hline 
		1e-6 & 00:00:06 & 1e-3 & 00:00:12 & 1e-4 & 00:00:04 & 1e-5 & 00:00:11\\
		\hline 
		1e-7 & 00:00:06 & 1e-4 & 00:01:50 & 1e-5 & 00:00:22 & 1e-6 & 00:00:46\\
		\hline 
	\end{tabular}
\end{table}

\begin{figure}[H]
	\includegraphics[width = \columnwidth]{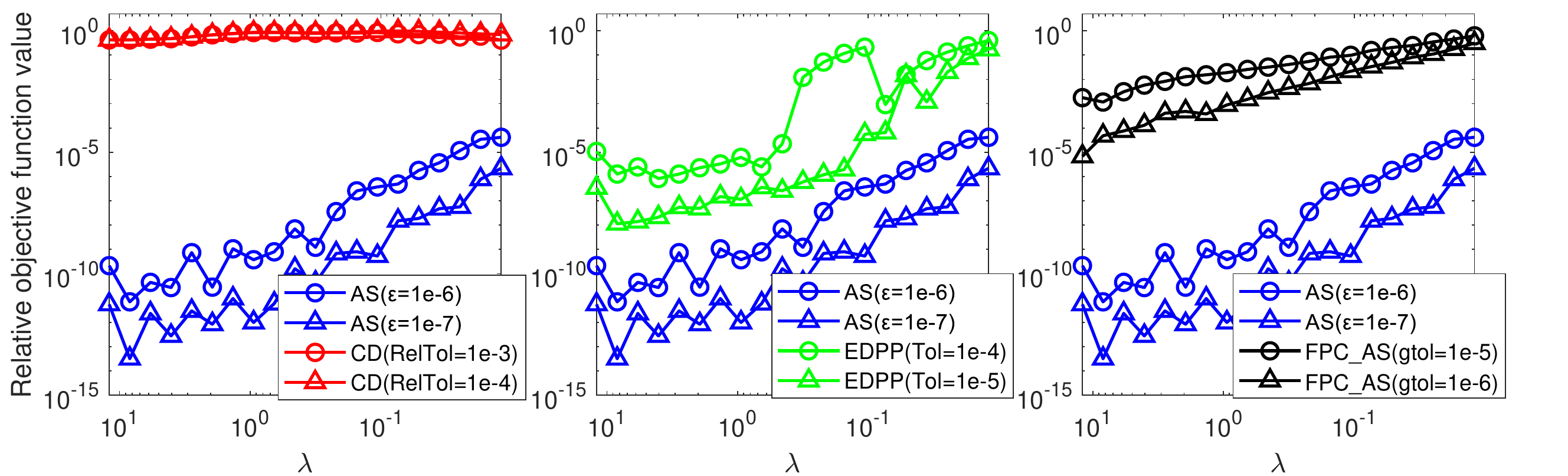}
	\caption{Comparison among different approaches in generating solution paths for the Lasso model on mpg7 data set with $\lambda_c$ decreasing from $1$ to $10^{-4}$.}
	\label{fig: mpg_path}
\end{figure}

First, we compare the AS strategy with Matlab's CD, EDPP, and FPC\_AS on the path generation for the small data set mpg7. The running time of each approach with different choices of tolerances is shown in Table \ref{tab: mpg_lasso_path}, and the relative objective function values along the solution path obtained by each approach are shown in Figure \ref{fig: mpg_path}. It can be seen that the AS strategy outperforms other approaches by achieving more accurate solutions in less time. This is, it stands out in both speed and solution quality.

Next, we test the performance of the AS strategy combined with different algorithms for solving the reduced problems when solving the Lasso model on the small mpg7 data set. The results are shown in Figure \ref{fig: mpg_alg}. We can see that the AS strategy can consistently improve the performance of SSNAL, ADMM, APG and GRNM, which again demonstrates its effectiveness and flexibility.

\begin{figure}[H]
	\hspace{-0.3cm}
	\includegraphics[width = 1.15\columnwidth]{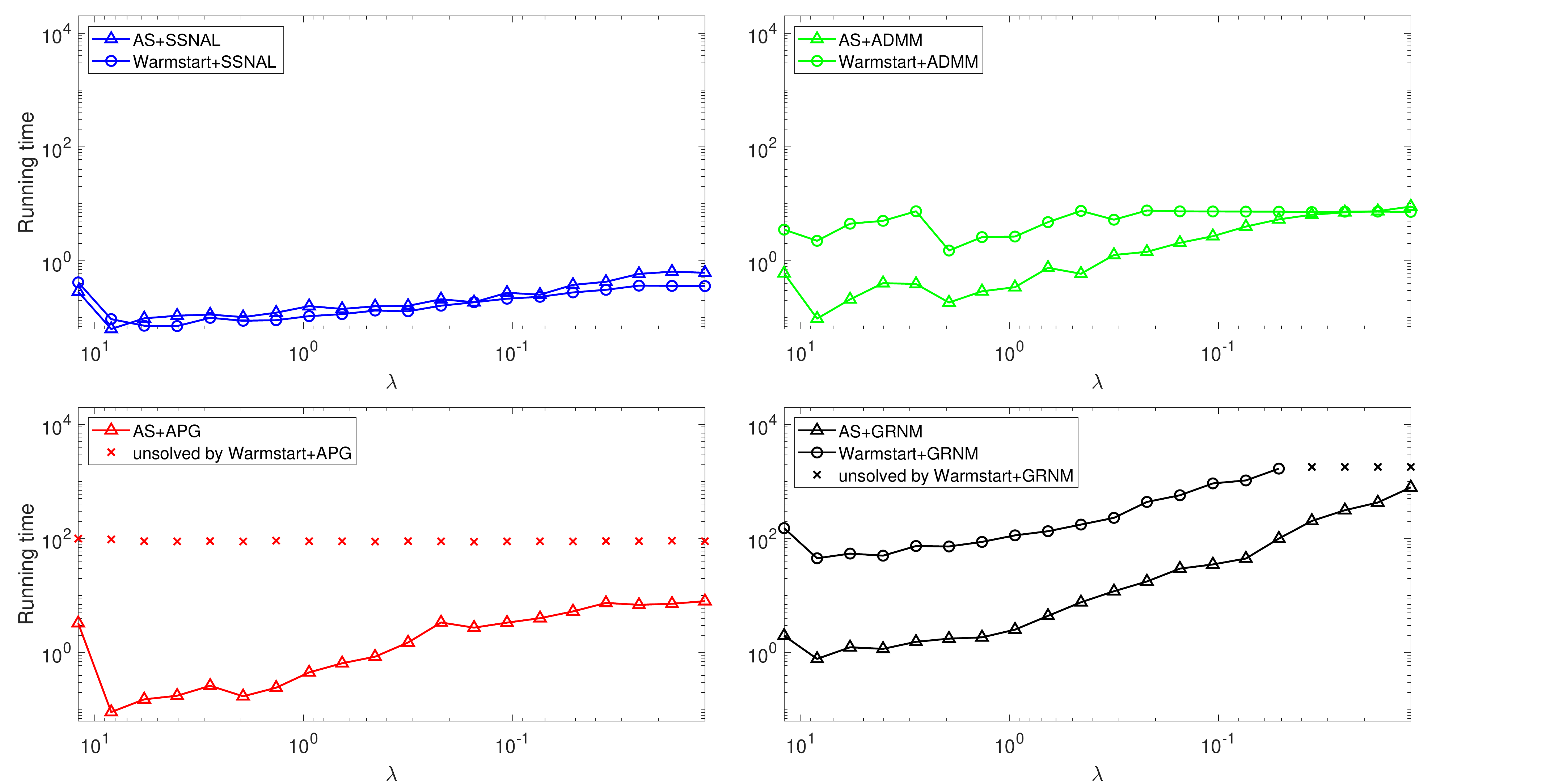}
	\setlength{\abovecaptionskip}{-5pt}
	\caption{The combination of the AS strategy with different algorithms for solving the reduced problems for the Lasso model on mpg7 data set.}
	\label{fig: mpg_alg}
\end{figure}

To better illustrate the generality and flexibility of the proposed AS strategy, in addition to the experiments on the small mpg7 data set, we conduct more experiments on other larger UCI data sets. For the algorithms to solve the corresponding models and their reduced problems, we take SSNAL and ADMM as two examples for illustration. We follow the same setting as described in Section \ref{sec: lasso-linear-synthetic} to choose the values for the parameter $\lambda$. The results are summarized in Table \ref{tab: AS-PPDNA-lasso-uci}. 

As one can see from Table \ref{tab: AS-PPDNA-lasso-uci}, the SSNAL algorithm together with the warm-start technique already gives great performance, which takes less than 1 minute to generate a solution path on each of the first seven data sets and takes around 8 minutes to generate a solution path on the last data set. Surprisingly, the SSNAL algorithm together with our proposed AS strategy can further speed up the path generation by a large margin, which can generate solution paths on all data sets within 45 seconds. In addition, the AS strategy also shows excellent performance when the reduced problems are solved by the ADMM algorithm. For example, for bodyfat7 data set, the ADMM algorithm with the warm-start technique takes more than 2 hours to generate a solution path, while it only takes around 20 seconds with the AS strategy. Moreover, for the pyrim5 data set, the AS strategy allows the ADMM algorithm to generate a solution path with the required accuracy within 1 minute, while it takes 3 hours with the warm-start technique to generate a solution path with low accuracy.

\begin{table}[H]
	\caption{Numerical performance of the AS strategy applied to the generation of solution paths for Lasso linear regression models on UCI instances. In the table, $\eta^*_{\rm KKT}$ denotes the worst relative KKT residual of the problems solved on the solution path, Org.D. denotes the original problem dimension, and the values in bold mean that the required accuracy not achieved.}
	\label{tab: AS-PPDNA-lasso-uci}
	\renewcommand\arraystretch{1.2}
	\centering
	\begin{tabular}{|@{\hspace{2pt}}c@{\hspace{2pt}}|@{\hspace{3pt}}c@{\hspace{3pt}}|c @{\ $|$\ } c @{\hspace{2pt}}|c@{\ $|$\ } c|@{\hspace{2pt}} c  @{\ $|$\ } c  @{\ $|$\ } c @{\hspace{2pt}}|}
		\hline
		&& \multicolumn{2}{c|}{Warmstart}  & \multicolumn{5}{c|}{With AS}  \\
		\hline
		Data(Org.D.) & Alg. & Time  & $\eta^*_{\rm KKT}$  &  Time & $\eta^*_{\rm KKT}$ & S.Rnd & Avg.D. & Max.D.\\ 
		\hline
		\multirow{2}*{\tabincell{c}{abalone7 \\ (6435)} }  & {\fontsize{7.3}{5}\selectfont SSNAL} & 00:00:32 & 8.02e-7 & 00:00:22 & 9.82e-7 & 41 & 291 & 1505 \\
		& {\fontsize{7.3}{5}\selectfont ADMM} & 01:16:54 & 1.00e-6 & 00:18:13 & 9.99e-7 & 41 & 293 & 1505\\
		\hline
		\multirow{2}*{\tabincell{c}{bcTCGA \\ (17322)} } & {\fontsize{7.3}{5}\selectfont SSNAL} & 00:00:11 & 9.82e-7  & 00:00:08 & 9.89e-7 & 38 & 547 & 1908 \\
		& {\fontsize{7.3}{5}\selectfont ADMM} & 00:14:24 & 1.00e-6   & 00:00:11 & 9.76e-7 & 39 & 547 & 1908\\
		\hline
		\multirow{2}*{\tabincell{c}{bodyfat7 \\ (116280)} } & {\fontsize{7.3}{5}\selectfont SSNAL} & 00:00:23 & 9.05e-7  & 00:00:03 & 9.87e-7 & 39 & 574 & 6390 \\
		& {\fontsize{7.3}{5}\selectfont ADMM} & 02:07:17 & {\fontsize{7}{9}\selectfont \textbf{8.44e-6}}  &  00:00:21 & 9.94e-7 & 38  &  578 & 6391\\
		\hline
		\multirow{2}*{\tabincell{c}{housing7 \\ (77520)} } & {\fontsize{7.3}{5}\selectfont SSNAL} & 00:00:37 & 9.56e-7  & 00:00:13 & 9.78e-7 & 57 & 1129 & 12634\\
		& {\fontsize{7.3}{5}\selectfont ADMM} & 02:30:10 & {\fontsize{7}{9}\selectfont \textbf{3.01e-5}}  & 00:02:14 & 9.98e-7 & 57 & 1129 & 12634\\
		\hline
		\multirow{2}*{\tabincell{c}{pyrim5 \\ (201376)} } & {\fontsize{7.3}{5}\selectfont SSNAL} & 00:00:57 & 9.75e-7 & 00:00:03 & 9.79e-7 & 41 & 492 & 4693  \\
		& {\fontsize{7.3}{5}\selectfont ADMM} & 03:00:55 & {\fontsize{7}{9}\selectfont \textbf{1.45e-4}} & 00:00:53 & 9.92e-7 & 44 & 491 & 4693 \\
		\hline
		\multirow{2}*{\tabincell{c}{space-ga9 \\ (5005)} } & {\fontsize{7.3}{5}\selectfont SSNAL} & 00:00:19 & 9.72e-7 & 00:00:12 & 9.93e-7 & 40 & 318 & 992  \\
		& {\fontsize{7.3}{5}\selectfont ADMM} &00:28:48 & 1.00e-6 & 00:08:48 & 9.99e-7 & 41 & 324 & 993\\
		\hline
		\multirow{2}*{\tabincell{c}{triazines4 \\ (635376)} } & {\fontsize{7.3}{5}\selectfont SSNAL} & 00:08:05 & 9.88e-7  & 00:00:44 & 9.90e-7 & 53 & 10187 & 55935  \\
		& {\fontsize{7.3}{5}\selectfont ADMM} & 10:00:09 & {\fontsize{7}{9}\selectfont \textbf{9.10e-3} }  & 02:46:28 & {\fontsize{7}{9}\selectfont \textbf{1.23e-5}} & 68 & 16179 & 55932  \\
		\hline
	\end{tabular}
\end{table}

\subsubsection{Numerical results on SLOPE linear regression model}
\label{sec: slope_real}
In this subsection, we present the numerical results of the AS strategy for the SLOPE linear regression model on the test instances from the UCI data repository. For the weights $\lambda_{(i)}$ in the SLOPE model, we follow the experiment settings in \cite{bogdan2015slope}. We test the numerical performance on generating a solution path for the SLOPE regression model with $\lambda =\lambda_c\|A^T b\|_{\infty}$, where $\lambda_c$ is taken from $10^{-1}$ to $10^{-4}$ with 20 equally divided grid points on the $\log_{10}$ scale. As for the algorithm to solve the corresponding models and their reduced problems, we take the state-of-the-art SSNAL method \cite{luo2019solving} and the popular first-order method ADMM for examples. It should be noted that, the AS strategy can be combined with any solver for solving the reduced problems. The results are summarized in Table \ref{tab: AS-PPDNA-SLOPE-uci}.

\begin{table}[H]
	\caption{Numerical performance of the AS strategy applied to the generation of solution paths for SLOPE linear regression models on UCI instances. }
	\label{tab: AS-PPDNA-SLOPE-uci}
	\renewcommand\arraystretch{1.2}
	\centering
	\begin{tabular}{|@{\hspace{2pt}}c@{\hspace{2pt}}|@{\hspace{3pt}}c@{\hspace{3pt}}|c @{\ $|$\ } c@{\hspace{2pt}}|c @{\ $|$\ } c|@{\hspace{2pt}}c  @{\ $|$\ } c  @{\ $|$\ } c @{\hspace{2pt}}|}
		\hline
		&& \multicolumn{2}{c|}{Warmstart}  & \multicolumn{5}{c|}{With AS}  \\
		\hline
		Data(Org.D.) & Alg. & Time  & $\eta^*_{\rm KKT}$  &  Time & $\eta^*_{\rm KKT}$ & S.Rnd & Avg.D. & Max.D.\\ 
		\hline
		\multirow{2}*{\tabincell{c}{abalone7 \\ (6435)} }  & {\fontsize{7.3}{5}\selectfont SSNAL} & 00:01:02 & 9.69e-7 & 00:00:31 & 4.12e-7 & 19 & 1010 & 4565 \\
		& {\fontsize{7.3}{5}\selectfont ADMM} & 00:42:14 & {\fontsize{7}{9}\selectfont \textbf{6.35e-6}} & 00:09:34 & 9.58e-7 & 22 & 1049 & 4565\\
		\hline
		\multirow{2}*{\tabincell{c}{bcTCGA \\ (17322)} } & {\fontsize{7.3}{5}\selectfont SSNAL} & 00:00:33 & 6.09e-7  & 00:00:23 & 3.61e-7 & 24 & 571 & 899 \\
		& {\fontsize{7.3}{5}\selectfont ADMM} & 00:02:44  & 9.99e-7  & 00:00:05 & 9.29e-7 & 24 & 571 & 899\\
		\hline
		\multirow{2}*{\tabincell{c}{bodyfat7 \\ (116280)} } & {\fontsize{7.3}{5}\selectfont SSNAL} & 00:01:24 & 8.81e-7 & 00:01:11 & 7.93e-7 & 14 & 19275 & 65275 \\
		& {\fontsize{7.3}{5}\selectfont ADMM} & 00:35:35 & {\fontsize{7}{9}\selectfont  \textbf{1.16e-5}} &  00:02:54 & 9.06e-7 & 15 & 16871 & 41866\\
		\hline
		\multirow{2}*{\tabincell{c}{housing7 \\ (77520)} } & {\fontsize{7.3}{5}\selectfont SSNAL} & 00:01:09 & 8.52e-7 & 00:00:23 & 5.84e-7 & 16 & 3036 & 11191\\
		& {\fontsize{7.3}{5}\selectfont ADMM} & 00:18:29 & {\fontsize{7}{9}\selectfont  \textbf{1.03e-6}} & 00:00:23 & 5.86e-7 & 16 & 2974 & 9299\\
		\hline
		\multirow{2}*{\tabincell{c}{mpg7 \\ (3432)} } & {\fontsize{7.3}{5}\selectfont SSNAL} & 00:00:06 & 6.70e-7 & 00:00:04 & 6.15e-7 & 21 & 321 & 982  \\
		& {\fontsize{7.3}{5}\selectfont ADMM} & 00:00:25 & 1.00e-6 &  00:00:04 & 9.32e-7 & 22 & 310 & 544 \\
		\hline
		\multirow{2}*{\tabincell{c}{pyrim5 \\ (201376)} } & {\fontsize{7.3}{5}\selectfont SSNAL} & 00:01:27 & 9.87e-7 & 00:00:10 & 4.04e-7 & 16 & 1948 & 12191 \\
		& {\fontsize{7.3}{5}\selectfont ADMM} & 01:23:55 & {\fontsize{7}{9}\selectfont  \textbf{9.92e-4}} & 00:00:12 & 7.49e-7 & 16 & 1948 & 12191 \\
		\hline
		\multirow{2}*{\tabincell{c}{space-ga9 \\ (5005)} } & {\fontsize{7.3}{5}\selectfont SSNAL} & 00:00:31 & 8.54e-7 & 00:00:10 & 5.56e-7 & 18 & 246 & 1427  \\
		& {\fontsize{7.3}{5}\selectfont ADMM} &00:07:30 & 9.90e-7 & 00:01:56 & 8.50e-7 & 21 & 205 & 616\\
		\hline
		\multirow{2}*{\tabincell{c}{triazines4 \\ (635376)} } & {\fontsize{7.3}{5}\selectfont SSNAL} & 00:16:39 & 8.26e-7 & 00:00:42 & 4.64e-7 & 16 & 3779 &  29465 \\
		& {\fontsize{7.3}{5}\selectfont ADMM} & 08:52:56 & {\fontsize{7}{9}\selectfont  \textbf{1.22e-4}} & 00:01:19 & 8.05e-7 & 16 & 3778 & 29465  \\
		\hline
	\end{tabular}
\end{table}

From Table \ref{tab: AS-PPDNA-SLOPE-uci}, we again observe the extremely good performance of the proposed AS strategy when combined with the SSNAL algorithm or the ADMM algorithm. For example, for the triazines4 data set, the SSNAL algorithm together with the warm-start technique takes more than 16 minutes to generate a solution path, while it only takes 42 seconds with the AS strategy. For the pyrim5 data set, the ADMM algorithm together with the warm-start technique takes more than one hour and 23 minutes to generate a solution path, while with the help of the AS strategy, it only takes 12 seconds for the path generation. From these experimental results, we can see that our proposed AS strategy can accelerate optimization algorithms for solving large-scale sparse optimization problems with intrinsic structured sparsity.

\subsubsection{Numerical results on exclusive lasso linear regression model}
We test the performance of the AS strategy for the exclusive lasso linear regression model on the NCEP/NCAR reanalysis 1 dataset. Since the exclusive lasso regularizer induces intra-group feature selections, we choose the group structure over features. In other words, there will be seven groups, and each group includes feature values of 10512 different locations. For the choice of the values for the parameter $\lambda$ and the weight vector $w$, we follow the same setting as described in Section \ref{sec: exclusive-lasso-linear-synthetic}. To solve the reduced problems, we apply the PPDNA algorithm and the ADMM algorithm for illustration. The results are summarized in Table \ref{tab: AS-PPDNA-exclusivelasso-climate}. We can observe that the AS strategy can accelerate the PPDNA algorithm by around 10 times and can accelerate the ADMM algorithm by more than 228 times. In addition, it also gives great performance in reducing the dimensions of the problems to be solved.

\begin{table}[H]
	\caption{Numerical performance of the AS strategy applied to the generation of solution paths for the exclusive lasso linear regression models on the NCEP/NCAR reanalysis 1 dataset.}
	\label{tab: AS-PPDNA-exclusivelasso-climate}
	\renewcommand\arraystretch{1.2}
	\centering
	\begin{tabular}{|@{\hspace{2pt}}c@{\hspace{1pt}}|@{\hspace{3pt}}c@{\hspace{3pt}}|c @{\ $|$\ } c@{\hspace{2pt}}|c @{\ $|$\ } c|@{\hspace{2pt}}c  @{\ $|$\ } c  @{\ $|$\ } c @{\hspace{2pt}}|}
		\hline
		& & \multicolumn{2}{c|}{Warmstart}  & \multicolumn{5}{c|}{With AS}  \\
		\hline
		Data(Org.D.) & Alg. & Time  & $\eta^*_{\rm KKT}$  &  Time & $\eta^*_{\rm KKT}$ & S.Rnd & Avg.D. & Max.D.\\ 
		\hline 
		\multirow{2}*{\tabincell{c}{NCEP/NCAR \\ (73584)} }  & {\fontsize{7.3}{5}\selectfont PPDNA} & 00:01:18 & 9.85e-7  & 00:00:08 & 9.25e-7 & 14 & 178 & 2810 \\
		& {\fontsize{7.3}{5}\selectfont ADMM} & 03:10:27 & {\fontsize{7}{9}\selectfont  \textbf{3.96e-4}} & 00:00:50 & 9.39e-7 & 14 & 179 & 2810\\
		\hline
	\end{tabular}
\end{table}

\subsubsection{Numerical results on sparse group lasso linear regression model}
\label{sec: sglasso_real}
We test the performance of the AS strategy for the sparse group lasso linear regression model on the NCEP/NCAR reanalysis 1 dataset. We follow the numerical experiment settings in \cite{zhang2018efficient}, and we choose the group structure over locations. In other words, there will be 10512 groups, and each group includes 7 features. Following the settings in \cite{zhang2018efficient}, we test the numerical performance of the AS strategy for generating a solution path for the sparse group lasso linear regression model with $w_l = \sqrt{|G_l|}$ for each $l=1,\cdots,g$. For the parameters $\lambda_1$ and $\lambda_2$ in the sparse group lasso regularizer, we take $\lambda_1 =  \lambda_c\|A^T b\|_{\infty}$, $\lambda_2 = \lambda_1/w_{\max}$ with $w_{\max} = \max \{w_1, \dots, w_g\}$. Here, $\lambda_c$ is taken from $10^{-1}$ to $10^{-3}$ with 20 equally divided grid points on the $\log_{10}$ scale. As for solving the reduced problems in the AS procedure, we employ the state-of-the-art SSNAL method \cite{zhang2018efficient}. The comparison of the efficiency and robustness between the SSNAL method and other popular algorithms for solving sparse group lasso models is presented in \cite{zhang2018efficient}. In addition, the popular first-order method ADMM algorithm is also employed.

The results are summarized in Table \ref{tab: AS-SSNAL-grouplasso-climate}. It can be seen from the table that, the AS strategy can accelerate the SSNAL algorithm by around 54 times and accelerate the ADMM algorithm by around 187 times.

\begin{table}[H]
	\caption{Numerical performance of the AS strategy applied to the generation of solution paths for the sparse group lasso linear regression models on the NCEP/NCAR reanalysis 1 dataset.}
	\label{tab: AS-SSNAL-grouplasso-climate}
	\renewcommand\arraystretch{1.2}
	\centering
	\begin{tabular}{|@{\hspace{2pt}}c@{\hspace{1pt}}|@{\hspace{3pt}}c@{\hspace{3pt}}|c @{\ $|$\ } c@{\hspace{2pt}}|c@{\ $|$\ } c|@{\hspace{2pt}}c  @{\ $|$\ } c  @{\ $|$\ } c @{\hspace{2pt}}|}
		\hline
		& & \multicolumn{2}{c|}{Warmstart}  & \multicolumn{5}{c|}{With AS}  \\
		\hline
		Data(Org.D.) & Alg. & Time  & $\eta^*_{\rm KKT}$  &  Time & $\eta^*_{\rm KKT}$ & S.Rnd & Avg.D. & Max.D.\\ 
		\hline 
		\multirow{2}*{\tabincell{c}{NCEP/NCAR \\ (73584)} }  & {\fontsize{7.3}{5}\selectfont PPDNA} & 00:24:33 & 4.81e-7 & 00:00:27 & 9.81e-7 & 22 & 146 & 2711 \\
		& {\fontsize{7.3}{5}\selectfont ADMM} & 02:48:46 & {\fontsize{7}{9}\selectfont \textbf{4.40e-3}} & 00:00:54 & 8.83e-7 & 24 & 145 & 2711\\
		\hline
	\end{tabular}
\end{table}

\section{Conclusion}
\label{sec:conclusion}
In this paper, we design an adaptive sieving strategy for solving general sparse optimization models and further generating solution paths for a given sequence of parameters. For each reduced problem involved in the AS strategy, we allow it to be solved inexactly. The finite termination property of the AS strategy has also been established. Extensive numerical experiments have been conducted to demonstrate the effectiveness and flexibility of the AS strategy to solve large-scale machine learning models.

\section*{Funding}
Yancheng Yuan is supported in part by The Hong Kong Polytechnic University under Grants P0038284/P0045485, and the Research Center for Intelligent Operations Research. Meixia Lin is supported by the Ministry of Education, Singapore, under its Academic Research Fund Tier 2 grant call (MOE-T2EP20123-0013). Defeng Sun is supported in part by the Hong Kong Research Grant Council under Grant 15304721. Kim-Chuan Toh is supported by the Ministry of Education, Singapore, under its Academic Research Fund Tier 3 grant call (MOE-2019-T3-1-010).

\begin{appendix}
{\normalsize
\section{More numerical experiments on synthetic data}
In this section, we present more experiments to demonstrate the performance of the AS strategy to generate solution paths for sparse optimization problems. 

\subsection{Numerical results on sparse group lasso linear regression problems}
\label{sec: sparse-group-lasso-linear-regression-synthetic}
In this subsection, we present the numerical results on the sparse group lasso linear regression model. We apply the state-of-the-art SSNAL method \cite{zhang2018efficient} to solve the sparse group lasso model and its corresponding reduced problems generated by the AS strategy during the path generation. For the choice of the values for the parameters $\lambda_1$, $\lambda_2$ and $w$, we follow the same setting as described in Section \ref{sec: sglasso_real}, and $\lambda_c$ is taken from $10^{-1}$ to $10^{-4}$ with 20 equally divided grid points on the $\log_{10}$ scale.

\begin{table}[H]
	\caption{Numerical performance of the AS strategy applied to the generation of solution paths for the sparse group lasso linear regression model on synthetic data sets with sparsity level $0.1\%$.}
	\label{tab: AS-PPDNA-sgrouplasso-Tab1}
	\renewcommand\arraystretch{1.2}
	\centering
	\begin{tabular}{|c|r|c @{\ $|$\ } c|c|c @{\ $|$\ } c|}  
		\hline
		& & \multicolumn{2}{c|}{Total time (hh:mm:ss)}  & \multicolumn{3}{c|}{Information of the AS} \\
		\hline
		$m$  & $n (= g \times p)$  & Warmstart & With AS  & S. Rnd & Avg. D. & Max. D. \\
		\hline
		\multirow{4}*{\tabincell{c}{$500$}}
		& $100,000$ & 00:03:00 & 00:00:35 & 17  & 1743 & 3588 \\
		& $200,000$ & 00:04:58 & 00:00:38 & 16  & 2234 & 4829 \\
		& $600,000$ & 00:14:11 & 00:00:55 & 21  & 2764 & 7839 \\
		& $1,200,000$ & 00:27:34 & 00:01:00 & 20  & 3215 & 10998 \\
		\hline
		\multirow{4}*{\tabincell{c}{$1000$}}
		& $100,000$ & 00:05:47 & 00:02:36 & 22  & 2000 & 4259\\
		& $200,000$ & 00:10:28 & 00:02:16 & 19  & 2827 & 6295\\
		& $600,000$ & 00:31:02 & 00:02:16 & 17  & 3570 & 8311 \\
		& $1,200,000$ & 00:52:40 & 00:02:21 & 16  & 4286 & 11252 \\
		\hline
		\multirow{4}*{\tabincell{c}{$2000$}}
		& $100,000$ & 00:15:46 & 00:06:19 & 34  & 2062 & 3464 \\
		& $200,000$ & 00:23:18 & 00:06:46 & 33  & 3006 & 6321 \\
		& $600,000$ & 00:58:03 & 00:05:31 & 25  & 4742 & 11063 \\
		& $1,200,000$ & 01:55:24 & 00:07:07 & 26  & 5327 & 12557 \\
		\hline
	\end{tabular}
\end{table}

We summarize the numerical results of the comparison of the AS strategy and the warm-start technique for generating solution paths of sparse group lasso linear regression problems in Table \ref{tab: AS-PPDNA-sgrouplasso-Tab1}. As we can see from the table, the AS strategy significantly accelerates the path generation of the sparse group lasso linear regression models. For example, on the synthetic data set of size $(m,n)=(2000,1200000)$, generating a solution path of the sparse group lasso problem with the warm-start technique takes around $2$ hours, while with our proposed AS strategy, it only takes around $7$ minutes. The superior performance of the AS strategy can also been seen from the much smaller sizes of the reduced problems compared with the original problem sizes.

\subsection{Numerical results on SLOPE linear regression problems}
\label{sec: SLOPE-linear-regression-synthetic}
Here, we present the results on SLOPE linear regression model. We will apply the state-of-the-art SSNAL method \cite{luo2019solving} to solve the SLOPE regression model and its corresponding reduced problems generated by the AS strategy. The comparison of the efficiency between the SSNAL method and other popular algorithms for solving SLOPE models can be found in \cite{luo2019solving}. The choice of the values for the parameter $\lambda$ follows from the same setting as described in Section \ref{sec: slope_real}.

The results are presented in Table \ref{tab: AS-PPDNA-SLOPE-Tab1}. We again observe the superior performance of the AS strategy on solving sparse optimization models. Specifically, when we apply the warm-start technique to generate solution paths for the SLOPE linear regression model with $(m,n)=(2000,1200000)$, it runs out of memory, while with our AS strategy, it takes less than 12 minutes. Moreover, our AS strategy gives great performance in reducing the problem sizes during the path generation.

\begin{table}[H]
	\caption{Numerical performance of the AS strategy applied to the generation of solution paths for the SLOPE linear regression model on synthetic data sets with sparsity level $0.1\%$.}
	\label{tab: AS-PPDNA-SLOPE-Tab1}
	\renewcommand\arraystretch{1.2}
	\centering
	\begin{tabular}{|c|r|c @{\ $|$\ } c|c|c @{\ $|$\ } c|}  
		\hline
		& & \multicolumn{2}{c|}{Total time (hh:mm:ss)}  & \multicolumn{3}{c|}{Information of the AS} \\
		\hline
		$m$  & $n (= g \times p)$  & Warmstart & With AS  & S. Rnd & Avg. D. & Max. D. \\
		\hline
		\multirow{4}*{\tabincell{c}{$500$}}
		& $100,000$ & 00:01:38 & 00:00:38 & 22  & 1484 & 3122 \\
		& $200,000$ & 00:02:42 & 00:00:36 & 19  & 1962 & 4474 \\
		& $600,000$ & 00:06:30 & 00:00:37 & 17  & 2314 & 3922 \\
		& $1,200,000$ & 00:13:31 & 00:00:44 &  14  & 3140 & 10954 \\
		\hline
		\multirow{4}*{\tabincell{c}{$1000$}}
		& $100,000$ & 00:04:30 & 00:02:33 & 21  & 2007 & 3000 \\
		& $200,000$ & 00:05:46 & 00:02:09 & 19  & 2981 & 4701 \\
		& $600,000$ & 00:14:44 & 00:02:44 & 21  & 4476 & 7988 \\
		& $1,200,000$ & 00:27:17 & 00:02:38 & 17  & 5050 & 11151 \\
		\hline
		\multirow{4}*{\tabincell{c}{$2000$}}
		& $100,000$ & 00:10:24 & 00:05:39 & 21  & 2514 & 4914 \\
		& $200,000$ & 00:17:31 & 00:09:38 & 21  & 3571 & 5553 \\
		& $600,000$ & 00:31:42 & 00:10:29 & 22  & 7038 & 8619 \\
		& $1,200,000$ & out-of-memory & 00:11:26 & 23  & 8322 & 11234 \\
		\hline
	\end{tabular}
\end{table}

\subsection{Numerical results on data sets with other sparsity level}
\label{sec: other_sparsity}
As mentioned in Section \ref{sec:linear_synthetic}, in the data generation procedure, we set each group of the ground-truth $x^*$ to contain $r$ nonzero elements. We have tested the case when $r = \lfloor 0.1\% \times p \rfloor$ in Sections \ref{sec:linear_synthetic}-\ref{sec: algo_com}, \ref{sec: sparse-group-lasso-linear-regression-synthetic} and \ref{sec: SLOPE-linear-regression-synthetic}. In this subsection, we present more experimental results for the case when $r = \lfloor 1\% \times p \rfloor$.

\begin{table}[H]
	\centering
	\caption{Numerical performance of the AS strategy applied to the generation of solution paths for the Lasso linear regression model on synthetic data sets with sparsity level at $1\%$. }
	\label{tab: AS-PPDNA-lasso-Tab3}
	\renewcommand\arraystretch{1.2}
	\begin{tabular}{|c|r|c @{\ $|$\ } c|c|c @{\ $|$\ } c|}  
		\hline
		& & \multicolumn{2}{c|}{Total time (hh:mm:ss)}  & \multicolumn{3}{c|}{Information of the AS} \\
		\hline
		$m$  & $n (= g \times p)$  & Warmstart & With AS  & S. Rnd & Avg. D. & Max. D. \\
		\hline
		\multirow{4}*{\tabincell{c}{$500$}}
		& $100,000$ & 00:00:53 & 00:00:07  & 31  & 682 & 3310 \\
		& $200,000$ & 00:01:14 & 00:00:07  & 26  & 765 & 4503 \\
		& $600,000$ & 00:04:49 & 00:00:09  & 25  & 830 & 7760 \\
		& $1,200,000$ & 00:08:46 & 00:00:12  & 23  & 997 & 10956 \\
		\hline
		\multirow{4}*{\tabincell{c}{$1000$}}
		& $100,000$ & 00:01:17 & 00:00:25  & 29  & 1304 & 4958 \\
		& $200,000$ & 00:02:28 & 00:00:27  & 29  & 1345 & 5180 \\
		& $600,000$ & 00:07:03 & 00:00:34  & 31  & 1554 & 7898 \\
		& $1,200,000$ & 00:12:55 & 00:00:38  & 27  & 1644 & 11035 \\
		\hline
		\multirow{4}*{\tabincell{c}{$2000$}}
		& $100,000$ & 00:03:20 & 00:02:14  & 33  & 2372 & 11074 \\
		&  $200,000$ & 00:05:04 & 00:02:17  & 34  & 2574 & 11172 \\
		& $600,000$ & 00:13:27 & 00:02:22  &32 & 2557 & 9357 \\
		&  $1,200,000$ & 00:25:30 & 00:02:39  & 31  & 2773 & 11757 \\
		\hline
	\end{tabular}
\end{table}

\begin{table}[H]
	\caption{Numerical performance of the AS strategy applied to the generation of solution paths for the exclusive lasso linear regression model on synthetic data sets with sparsity level at $1\%$.}
	\label{tab: AS-PPDNA-exclusivelasso-Tab3}
	\renewcommand\arraystretch{1.2}
	\centering
	\begin{tabular}{|c|r|c @{\ $|$\ } c|c|c @{\ $|$\ } c|}  
		\hline
		& & \multicolumn{2}{c|}{Total time (hh:mm:ss)}  & \multicolumn{3}{c|}{Information of the AS} \\
		\hline
		$m$  & $n (= g \times p)$  & Warmstart & With AS  & S. Rnd & Avg. D. & Max. D. \\
		\hline
		\multirow{4}*{\tabincell{c}{$500$}}
		& $100,000$ & 00:00:34 & 00:00:05 & 29  & 199 & 3161\\
		& $200,000$ & 00:01:09 & 00:00:05 & 30  & 206 & 4471 \\
		& $600,000$ & 00:03:06 & 00:00:08 & 24 & 284 & 7751 \\
		& $1,200,000$ & 00:07:03 & 00:00:14 & 22  & 343 & 10951 \\
		\hline
		\multirow{4}*{\tabincell{c}{$1000$}}
		& $100,000$ & 00:00:49 & 00:00:10 & 29  & 213 & 3161 \\
		& $200,000$ & 00:01:37 & 00:00:12 & 31  & 234 & 4471 \\
		& $600,000$ & 00:04:19 & 00:00:17 & 22  & 298 & 7751 \\
		& $1,200,000$ & 00:08:18 & 00:00:23 & 21  & 348 & 10951\\
		\hline
		\multirow{4}*{\tabincell{c}{$2000$}}
		& $100,000$ &  00:01:35 & 00:00:26 & 25  & 196 & 3161 \\
		& $200,000$ & 00:02:54 & 00:00:29 & 25  & 241 & 4471 \\
		& $600,000$ & 00:08:04 & 00:00:35 & 23  & 302 & 7751 \\
		& $1,200,000$ & 00:16:02 & 00:00:50 & 22  & 356 & 10951 \\
		\hline
	\end{tabular}
\end{table}

The numerical performance of the AS strategy applied to the generation of solution paths for the Lasso linear regression model on synthetic data sets with sparsity level at $1\%$ is shown in Table \ref{tab: AS-PPDNA-lasso-Tab3}, and that for the exclusive lasso linear regression model is shown in Table \ref{tab: AS-PPDNA-exclusivelasso-Tab3}. As demonstrated, the AS strategy once again showcases its superior performance for solving sparse optimization models by greatly accelerating the path generation and significantly reducing the problem dimensions.

}

\end{appendix}

\bibliography{references.bib}{}
\bibliographystyle{siam}

\end{document}